\documentclass[11pt]{amsart}
\input epsf
\usepackage{latexsym}
\usepackage{graphicx}
\setlength{\textheight}{7.5in} \setlength{\textwidth}{5.2in}
\flushbottom
\parindent=0pt
\parskip=4pt plus2pt minus2pt
\usepackage{amssymb,amsmath,amsthm,amscd,amssymb,enumerate, verbatim}
\usepackage[all]{xy}

\numberwithin{equation}{section}
\newtheorem{cor}[equation]{Corollary}

\newtheorem{thm}[equation]{Theorem}

\newtheorem{add}[equation]{Addendum}

\newtheorem{Example}[equation]{Example}
\newenvironment{ex}{\begin{Example}\rm}{\end{Example}}
\newtheorem{Notation}[equation]{Notation}

\newtheorem{remark}[equation]{Remark}
\newenvironment{rmk}{\begin{remark}\rm}{\end{remark}}
\def\co{\colon\thinspace}

\newcommand{\Int}{\mbox{Int}}

\newcommand{\cd}{\mbox{cd}}

\newcommand{\e}{\varepsilon}

\def\a{\alpha}
\def\G{\Gamma}
\def\g{\gamma}

\def\b{\beta}

\def\d{\partial}

\def\s{\sigma}

\def\S1{\bf S^1}

\mathsurround=1pt

\abovedisplayskip=6pt plus3pt minus3pt
\belowdisplayskip=6pt plus3pt minus3pt

\begin{document}

\title[Non-aspherical ends and nonpositive curvature]
{\bf Non-aspherical ends and nonpositive curvature}
\thanks{\it 2000 Mathematics Subject classification.{\rm\ 
Primary 53C20.}
{\it\! Keywords:\rm\ aspherical manifold, nonpositive curvature, end}.}\rm

\author{Igor Belegradek \and T. T$\hat{\mathrm{a}}$m Nguy$\tilde{\hat{\mathrm{e}}}$n Phan}

\address{Igor Belegradek\\School of Mathematics\\ Georgia Institute of
Technology\\ Atlanta, GA 30332-0160}\email{ib@math.gatech.edu}
\address{T. T$\hat{\mathrm{a}}$m Nguy$\tilde{\hat{\mathrm{e}}}$n Phan\\
Department of Mathematics\\
Binghamton University\\
State University of New York\\
Binghamton, NY 13902}
\email{tam@math.binghamton.edu}

\date{}

\begin{abstract}
We obtain restrictions on the boundary of a compact 
manifold whose interior admits a complete Riemannian metric 
of nonpositive sectional curvature.
\end{abstract}

\maketitle

\section{Introduction}

In this paper all manifolds are smooth, 
all metrics are Riemannian,  
the phrase ``nonpositive sectional curvature'' 
is abbreviated to ``$K\le 0$''. 
A manifold is {\it covered by} $\mathbb R^n$ if its universal cover 
is diffeomorphic to $\mathbb R^n$, e.g. any complete $n$-manifold of $K\le 0$
is covered by $\mathbb R^n$ by the Cartan-Hadamard theorem. 
A boundary component $B$ of a manifold $N$ is 
{\it $\pi_i$-incompressible\,} if the inclusion $B\hookrightarrow N$ 
induces an injection of $i$th homotopy groups,
and $B$ is {\it incompressible\,} if it is $\pi_i$-incompressible for each $i$.
Thus an incompressible boundary component of an aspherical manifold is
aspherical. Here is the main result of this paper:

\begin{thm}
\label{thm-intro: no injrad}
Let $M$ be a closed connected $m$-manifold that is either infranil or
locally symmetric irreducible of $K\le 0$ and real rank $\ge 2$.
If $N$ is a connected $n$-manifold with compact connected
boundary such that 
$\mathrm{Int}\,N$ admits a complete metric $g$ of $K\le 0$ and $\pi_1(\d N)\cong\pi_1(M)$, then:
\vspace{1pt}
\newline\phantom{\ \ }\textup{(1)}
$\d N$ is incompressible if either\vspace{1pt}
\newline\phantom{\textup{(1)}\ \ }\textup{(1a)} 
$\d N$ has a neighborhood $U$ in $N$ such that $\mathrm{Vol}_g (U\cap\mathrm{Int}\,N)$ 
is finite, \newline\phantom{\textup{(1)}\ \ (}or 
\newline\phantom{\textup{(1)}\ \ }\textup{(1b)} 
$\pi_1(M)$ has no proper torsion-free quotients and
$n-m=1$.
\vspace{2pt}
\newline\phantom{\ \ }\textup{(2)}
If $n-m\ge 3$,
then $\d N$ is $\pi_1$-incompressible, and
$\d N$ is the total 
space \newline\phantom{\textup{(2)}\ \ } of a spherical fibration 
over $M$.
\end{thm}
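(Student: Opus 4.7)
The plan is to first prove $\pi_1$-incompressibility of $\d N$ under each set of hypotheses, then bootstrap to (1) full incompressibility or (2) the spherical fibration structure, by analyzing the cover $\hat N\to N$ associated to $\pi_1(\d N)\subset\pi_1(N)$.

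For $\pi_1$-incompressibility, set $K=\ker(\pi_1(\d N)\to\pi_1(N))$, a normal subgroup of $\pi_1(\d N)\cong\pi_1(M)$. Since $\mathrm{Int}\,N$ carries a complete metric of $K\le 0$, Cartan--Hadamard gives $\widetilde{\mathrm{Int}\,N}\cong\R^n$ and forces $\pi_1(N)$ to be torsion-free; hence $\pi_1(M)/K$ is torsion-free. For higher-rank locally symmetric $M$, Margulis's Normal Subgroup Theorem forces $K$ to be of finite index (torsion-freeness then yielding $K=\pi_1(M)$) or contained in the finite center. For infranil $M$ under (1b), the ``no proper torsion-free quotient'' hypothesis directly gives $K\in\{1,\pi_1(M)\}$. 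To exclude the trivial-map case $K=\pi_1(M)$: under (1a), the lift of $\d N$ to $\tilde N$ combined with finite volume of $U$ and the structure of ends of finite-volume $K\le 0$ manifolds (whose thin cusp cross-sections have virtually nilpotent $\pi_1$) yields a contradiction via non-virtual-nilpotency of $\pi_1(M)$ or a direct lift-counting argument; under (1b) the compact lift of $\d N$ in $\R^n$ would bound a compact contractible region, forcing $\d N\simeq S^m$ against $\pi_1(M)\ne 1$.

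Given $\pi_1$-incompressibility, form the cover $\hat N\to N$ corresponding to $\pi_1(\d N)$. Then $\mathrm{Int}\,\hat N$ is a complete $K\le 0$ manifold with $\pi_1=\pi_1(M)$, hence aspherical, and $\d N$ is a boundary component with $\d N\hookrightarrow\hat N$ a $\pi_1$-iso. For (1), a deformation retraction of $\hat N$ onto $\d N$ --- from a finite-volume cusp / thin-part retraction under (1a), or from a codimension-$1$ collar plus Borel-type rigidity of aspherical manifold groups under (1b) --- makes $\d N$ aspherical and hence incompressible in $N$. For (2) with $n-m\ge 3$, the classifying map $f\co\d N\to M$ provided by asphericity of $M$ has homotopy fiber $\widetilde{\d N}$, so exhibiting $\d N$ as a spherical fibration over $M$ with fiber $S^{n-m-1}$ reduces to $\widetilde{\d N}\simeq S^{n-m-1}$. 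Identifying $\hat N$ with a disc-bundle neighborhood of an aspherical core homotopy equivalent to $M$, via the rigidity of $\pi_1(M)$ (Margulis superrigidity in the higher rank case, Bieberbach-type rigidity in the infranil case), makes $\d N$ the boundary sphere bundle and $\widetilde{\d N}\simeq\R^m\times S^{n-m-1}\simeq S^{n-m-1}$; for $n-m\ge 3$ the fiber is simply connected, so $\pi_1$-incompressibility of $\d N$ follows automatically from the fibration.

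The main obstacle is the identification of $\hat N$ with a disc-bundle neighborhood of an aspherical core homotopy equivalent to $M$: homotopical rigidity alone is insufficient, and the argument must combine the geometric rigidity of $\pi_1(M)$ with the end-structure theory of complete $K\le 0$ manifolds. This is precisely where the specific hypotheses on $M$ --- infranil or higher-rank locally symmetric --- are essential, since both provide the strong rigidity needed to transfer the bundle structure from the abstract core to the concrete cover.
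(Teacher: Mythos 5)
Your group-theoretic reduction (Margulis's Normal Subgroup Theorem plus torsion-freeness of $\pi_1(N)$ to pin down the kernel of $\pi_1(\d N)\to\pi_1(N)$) parallels how the paper exploits the ``no proper torsion-free quotients'' hypothesis, but the geometric core of the argument is missing, and the steps you substitute for it would fail. For (1), the passage from $\pi_1$-incompressibility to full incompressibility is exactly the hard part, and you dispose of it by invoking a ``finite-volume cusp / thin-part retraction'' under (1a) and ``Borel-type rigidity'' under (1b). Neither exists in this generality: for complete metrics of $K\le 0$ with no lower curvature bound there is no thin-part structure theory for ends (even Schroeder's theorem quoted in Section~\ref{sec: new obstr} needs $-1\le K\le 0$ and a no-small-flat-tori hypothesis), and homotopical rigidity of $\pi_1(M)$ gives no deformation retraction of $\hat N$ onto $\d N$. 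The paper's actual mechanism is Theorem~\ref{thm-intro: geom}: reductivity of $\pi_1(M)$ (Flat Torus Theorem in the infranil case, harmonic-map superrigidity in the higher-rank case) forces the deck group $H$ of the end either to stabilize a horoball or to act cocompactly on a proper totally geodesic submanifold. Under (1a), finite volume gives $\mathrm{Inj}\,\mathrm{Rad}\to 0$ on the end, which rules out the totally geodesic case by a displacement argument and then, via a convexity argument, traps a lifted cross-section $\widetilde B_s$ between two horospheres, so its inclusion into $\widetilde E$ factors through a contractible region and $\widetilde B_s$ is contractible --- this single step yields $\pi_1$-injectivity and asphericity simultaneously. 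Under (1b) the totally geodesic submanifold has codimension one by a cohomological-dimension count and the same sandwich argument runs with equidistant hypersurfaces (Addendum~\ref{add: codim one}).

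For (2), you yourself flag the identification of $\hat N$ with a disc-bundle neighborhood of a core as ``the main obstacle'' and propose to overcome it by geometric rigidity of $\pi_1(M)$; this cannot work as stated, because in part (2) no totally geodesic core need exist (the deck group may instead stabilize a horoball, as in the proof of Theorem~\ref{thm: spher fibr over M}). The paper first proves that $N$ is compact and $\d N\to N$ is a $\pi_1$-isomorphism by a purely homological argument (Remark~\ref{rmk: long exact pair} applied to the cover corresponding to the image of $\pi_1(\d N)$, whose classifying space has dimension $\le n-3$), and then obtains the spherical fibration from the Browder--Casson--Haefliger--Sullivan--Wall embedding theorem together with Rourke--Sanderson block-bundle theory --- surgery-theoretic rather than geometric input, which your proposal does not supply.
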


Recall that a closed manifold is {\it infranil} if it is of the 
form $G/\G$,
where $G$ is a connected, simply-connected, nilpotent Lie group,
$C$  is a maximal compact subgroup of $\mathrm{Aut}(G)$, 
and $\G$ is a torsion-free lattice in $G\rtimes C$.
Infranilmanifolds are aspherical, and their fundamental groups 
are precisely the finitely generated torsion-free 
virtually nilpotent groups.

The assumption ``$\pi_1(M)$ has no proper torsion-free quotients'' in (1b)
holds if $M$ is not infranil (by the Margulis Normal Subgroup Theorem). 
There are also some infranil manifolds
whose fundamental group has no proper torsion-free quotients; 
such manifolds are known in dimensions $2$,
$p^2-1$ and $p^3-p$ for every prime $p$ (see Example~\ref{ex: tf general}).

We actually prove somewhat stronger results as follows:
\begin{itemize}
\item 
The claim (1) holds when the group $\pi_1(\d N)$ is reductive,
where a group $G$ is 
{\it reductive\,} if for any epimorphism
of $G\to H$, where $H$ is a discrete, non-cocompact,
torsion-free isometry group of 
a Hadamard manifold, the group $H$ stabilizes
a horoball or acts cocompactly on a totally geodesic,
proper submanifold. 
See Examples~\ref{ex: anti-A_n} and~\ref{ex: elem}
for a list of reductive groups.
\item
(1a) is true when $\d N$ has a compact neighborhood $U$ in $N$
such that $U\cap\mathrm{Int}\, N$ has ${Inj}\,{Rad}\to 0$, 
see Corollary~\ref{cor-intro: tg}. 
\item
The fibration $\d N\to M$ in (2) is not arbitrary, e.g. 
if the fibration admits {\it no\,} section, then it is isomorphic to a linear sphere bundle over $M$,
and moreover $M$ admits a metric of $K\le 0$,
see Theorem~\ref{thm: spher fibr over M}(2).
\end{itemize}

We say that $B$ {\it bounds} $N$ if $B$ and $N$ are connected 
(not necessarily compact)
manifolds such that $B$ is diffeomorphic to $\d N$; 
this terminology is non-standard.

Gromov~\cite{Gro-rand03} and Izeki-Nayatani~\cite{IseNay05}
constructed many a group with finite classifying space such that
any isometric action of the group on a Hadamard manifold
fixes a point. As we note in Section~\ref{sec: known obstr}, 
every group with these properties is the fundamental group of a 
closed non-aspherical manifold that \vspace{3pt}\newline
\phantom{\ \ } (i) bounds no manifold
whose interior has a complete metric of $K\le 0$, and\vspace{2pt}
\newline \phantom{\ \,} (ii)
bounds a compact manifold whose interior
is covered by $\mathbb R^n$.

Theorem~\ref{thm-intro: no injrad} also gives many 
closed non-aspherical manifolds satisfying (i)--(ii), e.g. 
if $B$ is the total space 
of a linear $S^k$ bundle over $M$,
where $k\ge 2$ and $M$ is as in Theorem~\ref{thm-intro: no injrad}, 
then $B$ satisfies (ii) and the above discussion implies:
\begin{itemize}
\item
If $M$ is infranil and not flat, and if
the $S^k$ bundle $B\to M$ has no section, then
$B$ satisfies (i), see Theorem~\ref{thm: spher fibr over M}(2). 
\item
$B$ cannot bound a manifold that admits
a finite volume complete metric of $K\le 0$, 
see Corollary~\ref{cor-intro: tg}. 
\end{itemize}
Finiteness of volume in the last example is essential: 
the interior of any linear disk bundle over a 
closed manifold of $K\le 0$
admits a complete metric of $K\le 0$~\cite{And-vb}. 

Other ways to produce closed manifolds satisfying (i)--(ii) 
are described Examples~\ref{ex: spherical fibrations}(2)--(3) 
and~\ref{ex: attach circle times homology sphere}. 

By contrast, one does not know whether a
closed {\it aspherical\,} manifold $B$ can
satisfy (i). A basic difficulty is that 
$B\times (0,1)$ could admit a complete metric of $K\le 0$.
Such a metric exists if e.g. $B$ itself admits a metric of $K\le 0$, 
or if $B$ is infranil~\cite{BK-GAFA}. 
In fact, $B$ might even bound a manifold whose interior 
admits a {\it finite volume\,} complete metric of $K\le 0$,
which happens in many cases, 
see~\cite{Ont-smth-hyp, Pha-sol, Bel-ewp}.

There seems to be no simple description of closed manifolds
that bound aspherical ones and in Section~\ref{sec: asph} we review 
some obstructions and examples. 
Results generalizing the parts 
(2), (1a), (1b)
of Theorem~\ref{thm-intro: no injrad}, 
are discussed in Sections~\ref{sec: known obstr},
\ref{sec: new obstr},
\ref{sec: codim 1}, respectively.
Section~\ref{sec: proofs} contains a proof of 
Theorem~\ref{thm-intro: geom}, the main technical result
of this paper, which implies (1).

{\bf Acknowledgments} Belegradek is grateful for NSF support (DMS-1105045).

\section{Boundaries of aspherical manifolds}
\label{sec: asph}

Before imposing any curvature restrictions we study
topological properties of boundaries of aspherical manifolds. 
Allowing $B$ to be noncompact almost makes this task meaningless:
if $B$ bounds $N$, then any open connected 
subset $U$ of $B$ bounds $U\cup\mathrm{Int}\,N$. 
As we see below aspherical manifolds with compact 
non-aspherical boundaries also exist in abundance.

\begin{ex}
Fiber bundles of aspherical manifolds is aspherical,
but their boundary is often not incompressible, e.g.
this happens for the product of two compact aspherical manifolds
with nonempty boundaries (as is immediate from the cohomological 
dimension count).
\end{ex}

\begin{ex}
Complete, locally symmetric manifold of $K\le 0$ that have
finite volume and $\mathbb Q$-rank $\ge 3$ are interiors 
of compact manifolds with
non-aspherical boundary.
\end{ex}

\begin{ex}
The fundamental group of any countable, locally finite, finite dimensional,
aspherical cell complex occurs as the fundamental group of the
boundary of an aspherical manifold, namely, the boundary
of the regular neighborhood of an codimension $\ge 3$ embedding
of the complex into some manifold (e.g. the Euclidean space). 
If the complex is a manifold, or if it a finite complex, then
the boundary is non-aspherical. (The former follows from
the homotopy sequence of the normal sphere bundles, and the latter
is true because the boundary and the complex
have isomorphic fundamental groups, and if the boundary were
aspherical, it would be a closed manifold
homotopy equivalent to a complex of lower dimension). 
\end{ex}

\begin{ex}
There is many a complete manifold $V$ of $K\le 0$
that deformation retracts onto a compact locally convex
submanifold $S$ of codimension zero (they
are usually called {\it convex-cocompact}\,). The normal
exponential map to $\d S$ identifies $\mathrm{Int}\,S$ with $V$,
and $\d S$ is often not aspherical.
\end{ex}

\begin{ex}
\label{ex: asph codim3 gluing}
Identifying two aspherical $n$-manifolds $N_1$, $N_2$
along diffeomorphic, aspherical, compact, $\pi_1$-injectively
embedded, proper, codimension zero submanifolds $D_i\subset\d N_i$
results in an aspherical manifold $N$, see~\cite{ScoWal}, whose boundary
is a union of $\d N_1-\mathrm{Int}\,D_1$ and $\d N_2-\mathrm{Int}\, D_2$
along $\d D_1\cong \d D_2$. 
Considering the case when $D_i$'s are disks we conclude that
if two manifolds bound aspherical manifolds, 
then so does their connected sum.
The boundary $\d N$ is often non-aspherical, 
e.g. if $N_1$, $N_2$ are compact, $D_i$ is homotopy
equivalent to a complex of dimension $\le n-3$, 
and the inclusions $D_i\hookrightarrow \d N_i$ are not
$\pi_1$-surjective, then
$\pi_1(\d N)$ is a nontrivial amalgamated
product over $\pi_1(D_i)$, 
so $\d N$ is not aspherical by a Mayer-Vietoris argument
in group cohomology. 
\end{ex}

With plenty of examples, we now turn to obstructions. 
In order for $B$ to bound an aspherical manifold, 
a certain covering space of $B$ must bound a 
contractible manifold. In formalizing how this restricts 
the topology of $B$, the following definition is helpful:
given a class of groups $\mathcal Q$, a group 
is {\it anti\,}--$\mathcal Q$ if it admits no nontrivial
homomorphism into a group in $\mathcal Q$. 
Clearly, the class of anti--$\mathcal Q$ groups
is closed under extensions, quotients,
and any group generated by a family of
anti--$\mathcal Q$ subgroups is anti--$\mathcal Q$.

\begin{ex} 
\label{ex: anti-A_n}
Let $\mathcal A_n$ denote the class of fundamental groups of aspherical $n$-manifolds.
Here are some examples of anti--$\mathcal A_n$ groups:
\begin{enumerate}
\item Any group generated by a set of finite order elements.
\item 
Any non-torsion-free simple group. 
\item
Any irreducible lattice in the isometry group of
a symmetric space of rank $\ge 2$ and dimension $>n$~\cite{BesFei02}. 
\item 
Certain finitely presented groups with strong fixed point
properties, see~\cite[Theorem 1.5]{ABJLMS}.
\item
Any non-elementary finitely presented
relatively hyperbolic group
has a finitely presented 
anti--$\mathcal A_n$ quotient,
obtainable by adding finitely many 
relators~\cite[Corrollary 1.6]{AMO}.
\item
Any finitely presented anti--$\mathcal A_n$ group is a
quotient of an anti--$\mathcal A_n$ 
non-elementary hyperbolic group. (The Rips construction 
of~\cite{BO-rips} produces a relatively hyperbolic group
$G$ with $G/K\cong Q$, where $Q$ is the given 
finitely presented anti--$\mathcal A_n$ group,
and $K$ is a quotient of $\mathbb Z_3\ast\mathbb Z_3$.
Here $G$ is anti--$\mathcal A_n$ because so are $Q$ and $K$.) 
\end{enumerate}
\end{ex}

The following summarizes some obstructions that prevent
a manifold from bounding an aspherical one. 

\begin{thm}\label{thm: intro-asp}
If $B$ bounds an aspherical, non-contractible $n$-manifold $N$, 
and $\pi_1(B)$ is anti--$\mathcal A_n$, then $B$ is noncompact,
parallelizable, its $\mathbb Z$-valued intersection
form of vanishes, and  
its $\mathbb Q/\mathbb Z$-valued torsion linking form 
vanishes.  
\end{thm}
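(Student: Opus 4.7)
The plan is to first establish that the inclusion $i\co B\hookrightarrow N$ is null-homotopic, and then extract each of the four conclusions from that single fact. Because $N$ is an aspherical $n$-manifold we have $\pi_1(N)\in\mathcal A_n$, so the anti--$\mathcal A_n$ hypothesis forces $i_*\co\pi_1(B)\to\pi_1(N)$ to be trivial; since $N$ is a $K(\pi_1(N),1)$ this is equivalent to $i$ being null-homotopic.

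For noncompactness, I would argue by contradiction: assume $B$ is compact. Triviality of $i_*$ means every connected component of $p^{-1}(B)\subset\tilde N$ (where $p\co\tilde N\to N$ is the universal cover) is the identity cover of $B$, i.e.\ a closed $(n-1)$-manifold homeomorphic to $B$ via $p$, and each such component is a connected component of $\partial\tilde N$. Since $N$ is aspherical and non-contractible, $\pi_1(N)$ is nontrivial and torsion-free, hence infinite, so there are infinitely many such compact components and $\tilde N$ itself is a connected noncompact contractible $n$-manifold. Each compact component contributes a $\mathbb Z/2$ summand, so $H_{n-1}(\partial\tilde N;\mathbb Z/2)\neq0$. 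On the other hand Lefschetz duality gives $H_n(\tilde N,\partial\tilde N;\mathbb Z/2)\cong H^0_c(\tilde N;\mathbb Z/2)=0$ (connected noncompact), and together with $H_{n-1}(\tilde N;\mathbb Z/2)=0$ the long exact sequence of $(\tilde N,\partial\tilde N)$ forces $H_{n-1}(\partial\tilde N;\mathbb Z/2)=0$, a contradiction. Parallelizability is then obstruction theory: $TN|_B$ is trivial (classified by the null-homotopic composite $B\to N\to BO(n)$) and the inward normal bundle is a trivial line, so $TB\oplus\mathbb R$ is trivial; the classifying map $B\to BO(n-1)$ of $TB$ lifts through the homotopy fiber $S^{n-1}$ of $BO(n-1)\to BO(n)$. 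Since $B$ is a connected open $(n-1)$-manifold, it has the homotopy type of a CW complex of dimension $\leq n-2$, and any such map into the $(n-2)$-connected $S^{n-1}$ is null; hence $TB$ is trivial, and in particular $B$ is orientable.

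For the $\mathbb Z$-valued intersection form, take $\alpha\in H_k(B;\mathbb Z)$ and $\beta\in H_{n-1-k}(B;\mathbb Z)$. Null-homotopy gives $\alpha=\partial\tilde a$ for a compact $(k{+}1)$-chain $\tilde a$ in $N$; I would isotope $\tilde a$ to be transverse to $B$ with $\tilde a\cap B=\alpha$ (interior in $\mathrm{Int}\,N$), so that $\tilde a\cdot_N\beta$ coincides geometrically with $\alpha\cdot_B\beta$. Viewing $\tilde a$ as a class in $H_{k+1}(N,B)$ and $\beta$ as a class in $H_{n-1-k}(N)$, the bilinear intersection pairing $H_{k+1}(N,B)\otimes H_{n-1-k}(N)\to H_0(N)\cong\mathbb Z$ evaluates to zero on $[\tilde a]\otimes[\beta]$ because $[\beta]=i_*[\beta]=0$. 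For the linking form the argument is the $\mathbb Z/m$-coefficient analogue. For $\alpha\in TH_k(B)$ with $m\alpha=0$, the Bockstein lift $\bar\alpha\in H_{k+1}(B;\mathbb Z/m)$ (represented by a $\mathbb Z$-chain $a$ with $\partial a=m\alpha$) satisfies $\mathrm{lk}(\alpha,\beta)=\bar\alpha\cdot_B\beta\in H_0(B;\mathbb Z/m)=\mathbb Z/m\subset\mathbb Q/\mathbb Z$. Thickening $\beta$ into the collar $B\times[0,1)\subset N$ yields $\bar\alpha\cdot_N(\beta\times I)=\bar\alpha\cdot_B\beta$ in $H_0(N;\mathbb Z/m)$, and this vanishes because $\bar\alpha$ is null in $H_{k+1}(N;\mathbb Z/m)$ by null-homotopy of $i$ on $\mathbb Z/m$-homology.

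The main obstacle is the noncompactness step, which hinges on the precise identification of the lifts of $B$ as full copies of $B$ (made possible by $i_*=0$) combined with Lefschetz duality on the noncompact $\tilde N$; using $\mathbb Z/2$-coefficients is crucial to avoid orientation assumptions on $\tilde N$ or $B$. Everything after that is an exercise in translating the null-homotopy of $i$ through standard obstruction-theoretic and intersection-theoretic machinery, with the linking form case being essentially the intersection form argument run with $\mathbb Z/m$ in place of $\mathbb Z$.
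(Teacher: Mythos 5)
Your overall strategy matches the paper's: the anti--$\mathcal A_n$ hypothesis kills $i_*\colon\pi_1(B)\to\pi_1(N)$, hence $i$ is null-homotopic, and then one extracts each conclusion via Lefschetz duality and intersection theory. The noncompactness argument (compactness of boundary components versus vanishing of $H^0_c$ for a connected noncompact $\widetilde N$) is the paper's argument, and your parallelizability step just spells out the standard fact the paper states.

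Where your argument has a genuine gap is the intersection and linking form steps. You work in $N$ itself and invoke the pairing $H_{k+1}(N,B;\mathbb Z)\otimes H_{n-1-k}(N;\mathbb Z)\to\mathbb Z$. But nothing in the hypotheses makes $N$ orientable, and this $\mathbb Z$-valued pairing is not defined on a nonorientable manifold; notably, you took care to use $\mathbb Z/2$ coefficients in the noncompactness step precisely to avoid orientability of $\widetilde N$, but that care is dropped here. You also lean on an (unstated) adjunction identity relating $\tilde a\cdot_N\beta$ to $\alpha\cdot_B\beta$, which needs justification. The paper avoids both issues at once by passing to the universal cover $W$ (the component of $\widetilde N$ with one boundary copy of $B$ attached): $W$ is contractible, hence parallelizable, hence $\mathbb Z$-oriented, and in $W$ one simply bounds \emph{both} $\alpha$ and $\beta$ by chains $\hat\alpha,\hat\beta$; their transverse intersection is a $1$-chain whose boundary is the intersection $0$-cycle, which therefore has degree zero. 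The linking form is handled the same way: $z-m\hat\alpha$ is a cycle in the contractible $W$, hence a boundary, so $\iota(z,\beta)\equiv 0\pmod m$. Your proof would be correct if you ran the intersection-theoretic steps in $W$ rather than in $N$; as written, the orientability of $N$ is an unjustified assumption.
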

\begin{proof}
Since $\pi_1(B)$ is anti--$\mathcal A_n$, the manifold
$B$ also bounds a contractible manifold $W$, whose interior is the
universal cover of $\mathrm{Int}\,N$. Since $W$ is parallelizable,
$B$ is stably parallelizable, and in particular orientable.

Let us show that $B$ is noncompact.
The long exact sequence of the pair gives an isomorphism $H_{n}(W,B)\to H_{n-1}(B)$,
so if $B$ were compact, Poincar\'e duality would imply nontriviality
of $H_{n-1}(B)$, and hence nontriviality of
$H_{n}(W,B)\cong H^0_c(W)$, so there would exist a compactly supported
constant function on the $0$--skeleton of $W$ which is only possible if $W$ were compact, 
but by assumptions $\pi_1(N)$ is nontrivial, and hence infinite,
so $W$ is noncompact.

Since $B$ is non-compact, its stable parallelizability
implies parallelizability.

Recall that the intersection form
$H_k(B)\times H_{n-1-k}(B)\to\mathbb Z$ can be computed
via algebraic intersection numbers.
If $\a$, $\b$ are simplicial cycles in $B$ of complementary dimensions,
then since $B$ bounds a contractible manifold $W$, there are simplicial chains $\hat\a$, $\hat\b$ in $W$ with
$\d \hat\a=\a$ and $\d\hat\b=b$. Choosing $\hat\a$, $\hat\b$ transverse,
and subdividing if necessary,
one observes that the intersection of $\hat\a$, $\hat\b$ is a $1$-chain $\hat c$, and 
the $0$-homology class of $\d\hat c$ equals 
the algebraic intersection number $\iota(\a,\b)$ in $B$.
Now $\iota(\alpha, \beta)$ is a sum of points labelled with $\pm$, and since
it is a boundary, the points come in pairs and for each $+$ there is a $-$, 
so that $\iota(\a,\b)=0$.

If $\tau H_k(B)$ denotes the torsion subgroup of $H_k(B)$,
then the linking form   
$\text{lk}\co \tau H_k(B)\times \tau 
H_{n-2-k}(B)\to \mathbb Q/\mathbb Z$
is defined as follows: given two torsion cycles $\a$, $\b$
the linking coefficient $\text{lk}([\a], [\b])$ equals
$\frac{1}{m}\iota(z,\b)$ modulo $1$, where 
$z$ is any chain in $B$ with $\d z=m\a$.
If $\hat\b$ is a chain in $W$ with $\d \hat\b=\b$,
then $\iota(z,\b)$ equals the intersection number of 
$z$ and $\hat\b$ in $W$.
Since $\a=\d\hat\a$ in $W$, we conclude that $z-m\hat\a$
is a cycle in $W$, which is a boundary, as $W$ is contractible.
The intersection number of any boundary with $\hat\b$ is zero,
hence $\iota(z,\b)$ is divisible by $m$, so
$\text{lk}([\a], [\b])=0$.
\end{proof}

\begin{rmk}
\label{rmk: long exact pair}
If a closed $(n-1)$-manifold $B$ bounds a manifold $W$ with 
$H_{n-1}(W;\mathbb Z_2)=0$, then $W$ is compact and $\d W=B$.
Indeed, $B$ bounds $B\cup\Int(W)$ which is compact by
an argument in the second paragraph of the above proof
giving a compactly supported constant function on
the $0$-skeleton of some triangulation of $B\cup\Int(W)$. 
\end{rmk}

\begin{rmk}
The same proof gives that if $B$ bounds a contractible manifold, then
$B$ is stably parallelizable, and its intersection form
and torsion linking form vanish. By Remark~\ref{rmk: long exact pair} 
if $B$ is compact, then it equals the boundary of a compact contractible manifold, so $B$
is a homology sphere, and conversely, any 
homology sphere bounds a 
topological contractible manifold~\cite{Ker69, Fre-jdg82}.
\end{rmk}

\begin{ex}
The following manifolds do not bound aspherical ones:
\begin{enumerate}
\item The connected sum of lens spaces, because it is a closed
manifold whose fundamental group is anti--$\mathcal A_n$.
\item The product of any manifold with 
$CP^k$ with $k\ge 2$ because it contains a
simply-connected open subset which is not 
parallelizable, namely, the tubular neighborhood of
$CP^k$.
\item The connected sum of any manifold and 
the product of two closed manifolds whose fundamental
groups are anti--$\mathcal A_n$, as the slice inclusions
of the factors in the product have nonzero 
intersection number.
\item
The product of a punctured 
$3$-dimensional lens space and a closed manifold
whose fundamental group is anti--$\mathcal A_n$
(for if $\a$ represents a generator in the first homology
of the lens space, then it links nontrivially with itself,
see e.g.~\cite{PrzYas}, so
its product with the 
closed manifold factor links nontrivially with $\a$).
\item 
Any manifold that contains the manifold in (2), (3), or (4)
as an open subset.
\end{enumerate}
\end{ex}

\section{Reductive groups and regular neighborhoods}
\label{sec: known obstr}

Let $\mathcal{NP}_n$ denote the class
of the fundamental groups of complete
$n$-manifolds of $K\le 0$. 
By the Cartan-Hadamard theorem, 
$\mathcal{NP}_n$ is a subset of $\mathcal{A}_n$. 
The two classes coincide for $n=2$ by the uniformization
and classification of surfaces. On the other hand, for each
$n\ge 4$ there is a closed, locally CAT$(0)$ and hence aspherical, $n$-manifold, 
whose fundamental group is not in $\mathcal{NP}_n$~\cite{DavJan-jdg-hyperb, DJL12}.

As mentioned in the introduction, examples of groups
with strong fixed point properties due to Gromov and Izeki-Nayatani
immediately imply:

\begin{thm}
\label{thm: anti-NP-Gromov}
There is a closed non-aspherical manifold that\vspace{1pt}\newline
\textup{(i)}
bounds a manifold whose interior is covered by a Euclidean space;\vspace{1pt}\newline
\textup{(ii)}
bounds no manifold whose interior 
has a complete metric of $K\le 0$.
\end{thm}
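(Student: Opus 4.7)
The plan is to realize $B$ as the boundary of a regular neighborhood of a finite $K(G,1)$-complex, where $G$ is a Gromov/Izeki--Nayatani group with the fixed point property on Hadamard manifolds, and then to verify (i) through the regular neighborhood itself and (ii) through the fixed point property combined with a Jordan--Brouwer separation argument in the universal cover.

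\emph{Construction and (i).} Fix $G$ with finite classifying space $P = K(G,1)$ of dimension $d$. Choose $n \ge \max(d+3,6)$, embed $P$ smoothly into $\mathbb R^n$, and let $W \subset \mathbb R^n$ be a compact smooth regular neighborhood; set $B := \partial W$. Codimension $n-d \ge 3$ and general position give $\pi_1(B) \cong \pi_1(W) \cong G$. If $B$ were aspherical, then $B = K(G,1)$ would have dimension $n-1 > d \ge \cd(G)$, a contradiction; so $B$ is non-aspherical. Then $B$ bounds $W$, and to establish (i) one must show $\widetilde{\Int(W)} \cong \mathbb R^n$. Since $\widetilde{\Int(W)}$ is a contractible open smooth $n$-manifold, Stallings' engulfing theorem ($n \ge 5$) reduces this to checking simple connectivity at infinity; this can be arranged by a sufficiently generous equivariant thickening of the classifying complex, or by stabilizing $W \leadsto W \times D^k$ (since any contractible open manifold becomes simply connected at infinity after crossing with $\mathbb R$) and tracking the resulting boundary change. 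I expect this to be the most delicate step of the proof.

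\emph{Part (ii).} Suppose for contradiction that $B$ bounds a manifold $N$ whose interior admits a complete metric of $K \le 0$. By Cartan--Hadamard, $X := \widetilde{\Int(N)}$ is diffeomorphic to $\mathbb R^n$ and $\pi_1(N)$ acts on $X$ freely by isometries. The composition $G = \pi_1(B) \to \pi_1(N) \hookrightarrow \Iso(X)$ is an isometric $G$-action on $X$; it has a fixed point by the hypothesis on $G$, and freeness of the $\pi_1(N)$-action forces the image of $G$ in $\pi_1(N)$ to be trivial, so $B \hookrightarrow N$ is zero on $\pi_1$. Consequently each lift of $B$ in $\widetilde N$ is a homeomorphic copy of $B$, and $\widetilde N$ has $|\pi_1(N)|$ boundary components.

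If $\pi_1(N) = 1$, then $\Int(N) = X \cong \mathbb R^n$, and Siebenmann's end theorem ($n \ge 6$) forces $N \cong D^n$; hence $B \cong S^{n-1}$ has trivial fundamental group, contradicting $\pi_1(B) = G \ne 1$. If $\pi_1(N) \ne 1$, then $\widetilde N$ has at least two boundary components and $X$ has at least two ends. Pick one lift $B_0 \subset \partial \widetilde N$ and push it slightly into $X$ via a collar to obtain an embedded closed orientable $(n-1)$-submanifold $B_0' \subset X \cong \mathbb R^n$. By Jordan--Brouwer, $B_0'$ separates $X$ into two components, exactly one with compact closure in $X$. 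But the collar side contains the open collar $B_0 \times (0,1/2)$ whose closure $B_0 \times (0,1/2]$ in $X$ is non-compact (the $t \to 0^+$ end escapes to infinity in the complete metric), and the complementary side contains at least one other end of $X$ coming from another boundary component of $\widetilde N$, so it also has non-compact closure. This contradicts Jordan--Brouwer and finishes (ii); given the fixed point property, this part is essentially formal, and the genuine difficulty lies in Part (i).
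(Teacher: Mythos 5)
Your construction (regular neighborhood of a finite $K(G,1)$ for a Gromov/Izeki--Nayatani group, $B=\partial W$) is exactly the paper's, and your treatment of (i) matches the paper's, which simply asserts that codimension $\ge 3$ regular neighborhoods have universal cover of the interior simply connected at infinity; your hedging there is not unreasonable, but it is not a new idea. Where you genuinely diverge is (ii). The paper reduces to its Theorem~\ref{thm: intro-asp}: since $\pi_1(B)\to\pi_1(N')$ is trivial, $B$ bounds a contractible manifold $W$, and the argument $H_{n-1}(B)\ne 0\Rightarrow H_n(W,B)\cong H^0_c(W)\ne 0\Rightarrow W$ compact forces $\pi_1(N')$ finite, hence trivial, which is then ruled out. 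You instead work directly in $X=\widetilde{\Int N'}\cong\mathbb R^n$ and derive a contradiction from the end structure (several non-precompact boundary collars inside a one-ended $\mathbb R^n$, packaged as a Jordan--Brouwer/Alexander-duality separation). Both routes hinge on the same first step (the fixed-point property forcing $\pi_1(B)\to\pi_1(N')$ to vanish), but the contradictions are extracted differently: the paper's is homological/compactness, yours is end-theoretic. Your explicit case split on $\pi_1(N')=1$ vs.\ $\ne 1$ is in fact cleaner than the paper's appeal to a theorem carrying a ``non-contractible'' hypothesis.

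One step of yours is stated incorrectly and should be repaired. In the case $\pi_1(N')=1$ you write that Siebenmann's end theorem forces $N'\cong D^n$ and hence $B\cong S^{n-1}$. That is not what Siebenmann gives, and the conclusion $N'\cong D^n$ is not available without first knowing $\partial N'$ is simply connected: there are plenty of compact contractible $n$-manifolds ($n\ge 5$) with non-simply-connected boundary. The correct argument is more elementary and does not need Siebenmann at all: $\Int N'\cong\mathbb R^n$ has exactly one end, and that end has a cofinal neighborhood system $\{B\times(0,\epsilon)\}$ with constant pro-$\pi_1$ equal to $\pi_1(B)=G$; since $\mathbb R^n$ is simply connected at infinity for $n\ge 3$, this forces $G=1$, a contradiction. (Equivalently: $N'$ is compact contractible, and its interior is $\mathbb R^n$ if and only if $\partial N'$ is simply connected, which fails.) With that replacement, your proof of (ii) is correct and self-contained. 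A last minor caveat on (ii), $\pi_1(N')\ne 1$ case: you invoke Jordan--Brouwer for the closed connected hypersurface $B_0'\subset\mathbb R^n$; this requires $B_0'$ orientable, which does hold here since $B=\partial W$ is stably parallelizable, but it is worth saying. Also, the noncompactness of $\overline{B_0\times(0,1/2)}$ in $X$ is purely topological and does not rely on ``escaping to infinity in the complete metric.''
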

\begin{proof}
Gromov~\cite{Gro-rand03} (cf. \cite[Theorem 1.1]{NaoSil11}) and
Izeki-Nayatani~\cite{IseNay05} showed that that there is many
a finite aspherical simplicial complex $Y$ such that $\pi_1(Y)$ is
anti--$\mathcal{NP}_n$ for all $n$.
(In the work of Izeki-Nayatani $Y$ is the, 
quotient of a Euclidean building by a uniform 
lattice in a $p$-adic symmetric space, while in Gromov's example
$\pi_1(Y)$ is (Gromov) hyperbolic; 
to date there is no explicit example of such a $Y$).
Now embed $Y$ into a Euclidean space as a codimension $\ge 3$
subcomplex and let $N$ be its regular neighborhood. The closed manifold $B:=\d N$
is non-aspherical because the inclusion $B\hookrightarrow N$ is $\pi_1$-injective
and hence the cohomological dimension of $\pi_1(B)$ is bounded above
by the cohomological dimension of $\pi_1(Y)$, which is $\le\dim(Y)$
while $\dim(B)\ge\dim(Y)+2$. 
One can choose the embedding of $Y$ so 
that the universal cover of $\Int(N)$ is simply-connected at infinity 
and hence diffeomorphic to a Euclidean space (in fact, all codimension
$\ge 3$ regular neighborhoods have this property).
Since $\pi_1(B)$ is anti--$\mathcal{NP}_n$,
Theorem~\ref{thm: intro-asp} implies that 
$B$ satisfies (i). 
\end{proof}

Reductive groups (defined in the introduction)
give another source of manifolds satisfying (i)--(ii).

\begin{ex}
\label{ex: elem}
(1) 
Clearly, the property of being reductive is inherited
by quotients, and every group
that is anti--$\mathcal{NP}_n$ for all $n$
is reductive.\newline
(2)
Any finitely generated, virtually nilpotent group
is reductive, see~\cite{Bel-bus} where this is deduced
by combining results of Gromov~\cite{BGS} with the Flat
Torus Theorem. \newline
(3) Any irreducible, uniform lattice in the isometry group
of a symmetric space of $K\le 0$ and real rank $>1$ is reductive,
see~\cite{Bel-bus} where it is deduced from the harmonic map superrigidity.
\end{ex}

\begin{rmk}
\label{rmk: elem}
If $G$ is a group as in Examples~\ref{ex: elem}(2)--(3), then
any torsion-free, quotient of $G$ is the fundamental group
of a closed aspherical $m$-manifold, which is an infranil or
irreducible, locally symmetric of $K\le 0$ of rank $\ge 2$, respectively,
where $m$ is bounded above by the virtual cohomological dimension of $G$. 
If $G$ is virtually nilpotent this follows from~\cite{DekIgo94}, 
and when $G$ is a higher rank lattice one invokes
Margulis's Normal Subgroup Theorem.
\end{rmk}

The following definition helps combine the above examples 
of reductive groups: Given groups $I$, $J$ and
a class of groups $\mathcal Q$ we say that
{\it $I$ reduces to $J$ relative to $\mathcal Q$}
if every homomorphism $I\to Q$ with $Q\in\mathcal Q$   
factors as a composite of an epimorphism $I\to J$ 
and a homomorphism $J\to Q$. 
Clearly if $\mathcal Q=\mathcal{NP}_n$, and $J$ is reductive, then so is $I$, and moreover,
$I$ and $J$ have the same quotients in  $\mathcal{NP}_n$.

\begin{ex}
The class of groups that reduce to $J$ 
relative to $\mathcal Q$ is sizable, e.g.
if $K$ is anti--$\mathcal Q$, then the direct product $K\times J$
 reduces to $J$ relative to $\mathcal Q$;
more generally, the same is true for any amalgamated product  
obtained by identifying a subgroup $A\le J$ with the second factor
of $K\times A$. Furthermore,
a variant of the Rips construction established 
in~\cite{BO-rips} implies that if 
there is an anti--$\mathcal Q$ non-elementary hyperbolic group,
then for any finitely presented group $J$,
there is a non-elementary hyperbolic group $I$
that reduces to $J$ relative to $\mathcal  Q$.
This result applies to $\mathcal Q=\mathcal{NP}_n$ since 
$\mathbb Z_2\ast\mathbb Z_3$ is anti--$\mathcal{NP}_n$
and non-elementary hyperbolic 
(if instead of $\mathbb Z_2\ast\mathbb Z_3$ one uses
the anti--$\mathcal{NP}_n$, torsion-free, hyperbolic groups
of~\cite{Gro-rand03}, then $I$ becomes torsion-free).
\end{ex}

\begin{thm}
\label{thm-intro: codim >2}
Let $B$ be a closed $(n-1)$-manifold such that
$\pi_1(B)$ is reductive and any nontrivial quotient of $\pi_1(B)$ in the class
$\mathcal{NP}_n$ has a finite classifying space of dimension $\le n-3$. 
If $B$ bounds a manifold $N$ such that $\mathrm{Int}\,N$ admits
a complete metric of $K\le 0$, then $N$ is compact, 
and the inclusion $B\to N$ is a $\pi_1$-isomorphism. 
\end{thm}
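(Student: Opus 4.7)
Let $G=\pi_1(B)$, $H=\pi_1(N)$, let $f\co G\to H$ be the inclusion-induced map, and set $Q=f(G)$. Let $\tilde X$ be the universal cover of $\mathrm{Int}\,N$, a Hadamard $n$-manifold on which $H$ acts freely, properly, and by isometries; this action is non-cocompact because $\mathrm{Int}\,N$ is noncompact, and the same holds for $Q$, since a cocompact subgroup of $H$ would have finite index. The plan is to dispose of the case $Q=1$ by an end-counting plus Lefschetz argument, and in the case $Q\ne 1$ to combine the reductivity of $G$ and the codimension hypothesis with the main technical Theorem~\ref{thm-intro: geom}.

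If $Q=1$, every lift of $B$ to the universal cover $\tilde N$ of $N$ is a copy of $B$, and these lifts are indexed by $H$; each contributes a distinct end to $\mathrm{Int}\,\tilde N=\tilde X$. For $n\ge 2$, $\tilde X\cong\mathbb R^n$ has a unique end, so $|H|=1$ and $N=\tilde N$. Then $N$ is contractible (having contractible interior), and the $\mathbb Z/2$-Lefschetz duality argument from the proof of Theorem~\ref{thm: intro-asp}---$H_n(N,B;\mathbb Z/2)\cong H_{n-1}(B;\mathbb Z/2)\ne 0$, identified with $H^0_c(N;\mathbb Z/2)$---forces $N$ compact, and the conclusion holds trivially.

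Assume $Q\ne 1$. Then $\tilde X/Q$ is a complete $K\le 0$ $n$-manifold, so $Q\in\mathcal{NP}_n$, and by hypothesis $Q$ has a finite $K(Q,1)$ of dimension $\le n-3$, giving $\cd(Q)\le n-3$. Applying the reductivity of $G$ to the epimorphism $G\twoheadrightarrow Q$ yields either (A) $Q$ stabilizes a horoball $B_0\subset\tilde X$, or (B) $Q$ acts cocompactly on a proper totally geodesic submanifold $Y\subset\tilde X$. In (B), $Y/Q$ is a closed aspherical $K(Q,1)$, so $\dim Y=\cd(Q)\le n-3$ (codimension $\ge 3$), and the normal exponential is a $Q$-equivariant deformation retraction $\tilde X\to Y$; in (A), the action on the horosphere $\partial B_0$ cannot be cocompact (else $\cd(Q)=n-1$). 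Form the cover $\bar N\to N$ with $\pi_1(\bar N)=Q$ and $\mathrm{Int}\,\bar N=\tilde X/Q$; then $B\hookrightarrow N$ lifts to a $\pi_1$-surjective embedding $B\hookrightarrow\bar N$ placing $B$ as a component of $\partial\bar N$. Since $\bar N\simeq K(Q,1)$ is of dimension $\le n-3$, $H_{n-1}(\bar N;\mathbb Z/2)=H_n(\bar N;\mathbb Z/2)=0$, and the long exact sequence yields $H_n(\bar N,B;\mathbb Z/2)\cong H_{n-1}(B;\mathbb Z/2)\ne 0$.

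Now invoke the main technical Theorem~\ref{thm-intro: geom} of Section~\ref{sec: proofs}: it identifies the end of $\mathrm{Int}\,\bar N$ containing $B$ with the standard end of the $Q$-action---the cusp $B_0/Q$ in (A), whose noncompact cross-section is incompatible with the compact $B$ and so rules out (A), or the sphere-bundle end of a normal disc bundle over $Y/Q$ in (B)---and concludes $\ker f=1$, $\partial\bar N=B$, and $\bar N$ compact. The cover $\bar N\to N$ is then finite with a single degree-$1$ boundary copy of $B$, forcing the degree to be $1$; thus $N=\bar N$ is compact and $B\to N$ is a $\pi_1$-isomorphism. The main obstacle is the deployment of Theorem~\ref{thm-intro: geom}: one must show, via the coarse and geodesic-boundary geometry of $\tilde X$ developed in Section~\ref{sec: proofs}, that $B$ sits in $\bar N$ as a cross-section of the expected end structure---excluding case (A) at the geometric level and, in case (B), identifying the spherical cross-section of the tubular neighborhood of $Y/Q$ with $B$---ruling out wandering configurations with nontrivial $\ker f$ or additional unaccounted noncompact ends of $\mathrm{Int}\,\bar N$.
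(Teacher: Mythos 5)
You share the paper's opening moves---passing to the cover $\bar N$ (the paper's $\hat N$) with $\pi_1=Q$, noting $\cd Q\le n-3$, and invoking reductivity to get the horoball/totally-geodesic dichotomy---but the finishing step is misdirected. You try to extract compactness of $\bar N$, $\partial\bar N=B$, and injectivity of $\pi_1(B)\to Q$ from Theorem~\ref{thm-intro: geom}, but that theorem cannot deliver any of these. Its part (1) is a biconditional, $B$ incompressible iff $\cd H=\dim B$; here $\cd Q\le n-3<n-1=\dim B$, so even in the horoball case it would say $B$ is \emph{not} incompressible, which gives no $\pi_1$-injectivity, and in case (B) its hypothesis (an $H$-invariant horoball) need not hold at all. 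Your attempt to exclude case (A) by saying the noncompact cusp cross-section is ``incompatible with the compact $B$'' also fails: $B$ is the compact boundary of $\bar N$, not a horosphere cross-section, and nothing prevents a compact-boundary manifold from having interior $\mathbb R\times X$ with $X$ noncompact. The paper does not rule out (A); it quotes the main result of \cite{Bel-bus} for the $\pi_1$-isomorphism there. Finally, ``the conclusion holds trivially'' in your $Q=1$ case is not quite right: the conclusion forces $\pi_1(B)=1$, and a priori $B$ might be a non-simply-connected homology sphere.

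What is actually needed is: (i) pass to $W:=B\cup\Int\hat N$, note it is homotopy equivalent to a complex of dimension $\le n-3$ so $H_{n-1}(W;\mathbb Z_2)=0$, and apply Remark~\ref{rmk: long exact pair} to conclude $W$ is a \emph{compact} manifold with $\partial W=B$; covering-space considerations then give $\hat N=N$ and $\pi_1$-surjectivity of $B\to N$; and (ii) finish geometrically: in case (B), $\Int N$ is the normal bundle of the compact codimension-$\ge3$ submanifold $S/\pi_1(N)$, so $N$ is a tubular neighborhood plus an $h$-cobordism and $B\to N$ is a $\pi_1$-isomorphism by general position; in case (A), $\Int N\cong\mathbb R\times(\text{horosphere}/\pi_1(N))$, and the $\pi_1$-isomorphism is the main result of \cite{Bel-bus}. (Your computation $H_n(\bar N,B;\mathbb Z/2)\ne 0$ is the right seed, but the duality you need is $H_n(W,\partial W)\cong H^0_c(W)$, so one should work with $W$, whose boundary is exactly $B$, rather than with $\bar N$, which could a priori have additional noncompact boundary components.)
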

\begin{proof}
Let $\hat N$ be the covering space of $N$ that corresponds to the image 
of $\pi_1(B)$ under the inclusion $B\hookrightarrow N$. Then $B$ bounds $B\cup\Int(\hat N)$
which by assumption has a classifying space of dimension $\le n-3$, so
$B\cup\Int(\hat N)$ is compact by Remark~\ref{rmk: long exact pair}.
Now covering space considerations imply that $B=\d \hat N$ and $\hat N=N$,
so the inclusion $B\hookrightarrow N$ is $\pi_1$-surjective.

Consider the $\pi_1(N)$-action on the Hadamard manifold that
covers $\mathrm{Int}\,N$. Since $\pi_1(B)$ is reductive, 
the action either stabilizes a horoball or a totally geodesic
submanifold $S$ of dimension $\le n-3$ where the deck-transformation group acts cocompactly.
Hence $\mathrm{Int}\,N$ is diffeomorphic to either the product of $\mathbb R$ and
another manifold, or to the normal bundle of $S/\pi_1(N)$, which is a compact submanifold 
of dimension $\le n-3$. In the latter case $N$ is obtained by attaching an $h$-cobordism
to the tubular neighborhood of  $S/\pi_1(N)$, so the inclusion $B\to N$
is a $\pi_1$-isomorphism. The same is true in the former case by the main result
of~\cite{Bel-bus}. 
\end{proof}

\begin{rmk} The conclusion of Theorem~\ref{thm-intro: codim >2} can be sharpened
as follows. Stallings's embedding up to homotopy type theorem~\cite{Sta-emb-up-to-homot} 
implies that $N$ of Theorem~\ref{thm-intro: codim >2} deformation retracts
to a regular neighborhood of a finite
subcomplex of codimension $q\ge 3$, so an excision argument 
as in~\cite[Theorem 2.1]{Sie-collar} shows that 
$N$ is obtained by attaching an h-cobordism
to the boundary of the regular neighborhood. 
\end{rmk}

If $N$ of Theorem~\ref{thm-intro: codim >2}
is homotopy equivalent to a closed manifold, 
one can say more, which requires the following background.

Let $N$ be a compact $n$-manifold that is homotopy equivalent
to a closed $m$-manifold $M$ such that $q=n-m\ge 3$, and
the inclusion $\d N\to N$ is a $\pi_1$-isomorphism.
The Browder-Casson-Haefliger-Sullivan-Wall embedding 
theorem~\cite[Corollary 11.3.4]{Wal-book}
shows that $N$ is obtained by attaching an h-cobordism
to the boundary of the regular neighborhood
of a PL-embedded copy of $M$.
Abstract regular neighborhoods of $M$ of codimension $q\ge 3$
are $q$-block bundles over $M$, and they are 
classified by homotopy
classes of maps of $M$ into $B\widetilde{PL}_q$,
the classifying space of the semisimplicial group 
$\widetilde{PL}_{q}$~\cite{RouSan-blk-bunI}.
Taking product with the $(q-k)$-cube defines a stabilization map
$B\widetilde{PL}_{k}\to B\widetilde{PL}_{q}$.
Each $q$-block bundle $R$ over $M$ defines a 
spherical fibration over $M$ with structure group 
in $\widetilde{PL}_{q}$ and fiber $S^{q-1}$; up to homotopy the fibration
is the inclusion $\d R\hookrightarrow R$~\cite{RouSan-blk-bunIII}.
Spherical fibrations over $M$ with fiber $S^{q-1}$
are classified by maps $M\to BG_q$, and we say that
{\it the structure group of a fibration reduces to
$\widetilde{PL}_{k\le q}$}  
if its classifying map factors (up to homotopy) through 
$B\widetilde{PL}_{k}\to B\widetilde{PL}_{q}\to BG_q$.
A spherical fibration is {\it linear}\,
if it is isomorphic to a unit sphere bundle 
of a vector bundle. 

\begin{thm} 
\label{thm: spher fibr over M}
Let $M$ be a closed $m$-manifold that is either infranil
or an irreducible, locally symmetric of $K\le 0$ and real rank $\ge 2$. 
Let $B$ be a closed $(n-1)$-manifold such that $q=n-m\ge 3$,
and $\pi_1(B)$ reduces to $\pi_1(M)$ relative to $\mathcal{NP}_n$. 
If $B$ bounds a manifold whose interior admits a complete 
metric of $K\le 0$, then \newline
\textup{(1)} 
$B$ is the total space of a spherical fibration $\b$ over $M$
whose structure group reduces to $\widetilde{PL}_q$. 
\newline
\textup{(2)} 
If $q\ge 4$ and the structure group of $\b$ does not reduce
to $\widetilde{PL}_{q-1}$, or if $q=3$ and $\b$ has no
section, then
$M$ admits a metric of $K\le 0$ and $\b$ is linear. 
\end{thm}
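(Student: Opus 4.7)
My plan is to bootstrap from Theorem~\ref{thm-intro: codim >2}, identify the three fundamental groups involved, apply the Browder-Casson-Haefliger-Sullivan-Wall embedding theorem for part~(1), and then use the reductive dichotomy to obtain the geometric conclusions of part~(2).

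First I would verify the hypotheses of Theorem~\ref{thm-intro: codim >2} for $B$ and $N$. The group $\pi_1(B)$ is reductive, since it reduces relative to $\mathcal{NP}_n$ to $\pi_1(M)$, which is reductive by Example~\ref{ex: elem}(2)--(3). Moreover, every nontrivial quotient of $\pi_1(B)$ in $\mathcal{NP}_n$ factors through $\pi_1(M)$, and by Remark~\ref{rmk: elem} every torsion-free quotient of $\pi_1(M)$ is the fundamental group of a closed aspherical manifold of the same type (infranil or higher-rank locally symmetric) of dimension at most $m = n-q \le n-3$, hence has a finite classifying space of dimension $\le n-3$. Thus Theorem~\ref{thm-intro: codim >2} gives that $N$ is compact and $B \hookrightarrow N$ is a $\pi_1$-isomorphism.

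Since $\pi_1(N) = \pi_1(B)$ lies in $\mathcal{NP}_n$, the identity $\pi_1(B) \to \pi_1(N)$ must factor as $\pi_1(B) \twoheadrightarrow \pi_1(M) \to \pi_1(N)$, forcing $\pi_1(B) \cong \pi_1(M) \cong \pi_1(N)$ and $N \simeq M$ (both are $K(\pi_1(M),1)$'s, since $N$ deformation retracts onto the aspherical $\mathrm{Int}\,N$). With $q \ge 3$ and $\partial N \hookrightarrow N$ a $\pi_1$-isomorphism, the Browder-Casson-Haefliger-Sullivan-Wall embedding theorem quoted before the statement produces a PL-embedded copy of $M$ in $N$ whose regular neighborhood $R$ is a $q$-block bundle, together with an h-cobordism $W = \overline{N \setminus R}$ from $\partial R$ to $B$. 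The block bundle $R$ yields a spherical fibration $\partial R \to M$ with structure group in $\widetilde{PL}_q$, and the h-cobordism identifies $B$ (up to homotopy) with its total space. This gives~(1).

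For~(2) I would analyze the $\pi_1(N)$-action on the Hadamard universal cover $\widetilde{\mathrm{Int}\,N}$. Reductivity provides a dichotomy: either \textit{(a)} $\pi_1(N)$ acts cocompactly on a proper totally geodesic submanifold $S$, or \textit{(b)} $\pi_1(N)$ stabilizes a horoball. In case~(a), the closed aspherical quotient $S/\pi_1(N)$ has fundamental group $\pi_1(M)$ and dimension exactly $m$ by the cohomological dimension equality for closed aspherical manifolds. By Mostow-Margulis rigidity in the higher-rank case, and by Bieberbach's classification together with topological rigidity of infranilmanifolds in the infranil case, $S/\pi_1(N)$ is diffeomorphic to $M$. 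A convex-geometric argument (cf.~\cite{Bel-bus}) then shows that the normal exponential map is a $\pi_1(N)$-equivariant diffeomorphism from the normal bundle of $S$ onto $\widetilde{\mathrm{Int}\,N}$, so $\mathrm{Int}\,N$ is the total space of a linear vector bundle $\nu$ over $M$. Taking $R$ above to be the disk bundle of $\nu$, the fibration $\beta$ is identified up to homotopy with the linear sphere bundle of $\nu$, and the totally geodesic embedding $S \hookrightarrow \widetilde{\mathrm{Int}\,N}$ induces a $K \le 0$ metric on $M$.

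The main obstacle is to exclude case~(b) under the hypothesis of~(2). The expected mechanism is that in case~(b) the results of~\cite{Bel-bus} on compactified Busemann-function quotients produce a smooth product decomposition $\mathrm{Int}\,N \cong \mathbb{R} \times X$, with the $\mathbb{R}$-factor dual to the descended Busemann function. Compared with the block-bundle structure given by the embedding theorem, this splits a trivial line summand off the normal data of $M$, producing both a nowhere-zero section of $\beta$ and a reduction of its structure group to $\widetilde{PL}_{q-1}$. Hence either version of the hypothesis---``no section'' for $q = 3$ or ``structure group does not reduce to $\widetilde{PL}_{q-1}$'' for $q \ge 4$---excludes case~(b), leaving only case~(a). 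Making the splitting-to-section passage rigorous, and matching the abstract block-bundle reduction against the geometric product structure, is the delicate technical point; a useful sanity check is that in the higher-rank setting case~(b) is already ruled out at the group-theoretic level, since horospherical subgroups are solvable while $\pi_1(M)$ is not.
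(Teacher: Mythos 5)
Your proposal matches the paper's proof essentially step for step: Theorem~\ref{thm-intro: codim >2} together with the factorization of the $\pi_1$-isomorphism through $\pi_1(M)$ gives (1) via the Browder--Casson--Haefliger--Sullivan--Wall embedding theorem, and the reductive dichotomy handles (2) exactly as in the paper, with the horoball case yielding a section (and, for $q\ge 4$, a $\widetilde{PL}_{q-1}$-reduction by PL-embedding $M$ into the horosphere quotient and crossing with $\mathbb R$) and the totally geodesic case yielding linearity and the $K\le 0$ metric on $M$ via the normal exponential map. One small correction: your closing ``sanity check'' is false --- the metric on $\mathrm{Int}\,N$ is an arbitrary complete metric of $K\le 0$, not a symmetric one, so horoball stabilizers need not be solvable and case (b) is not excluded group-theoretically even when $M$ is higher rank; the paper, like your main argument, rules it out only through the no-section/structure-group hypothesis.
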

\begin{proof}
Any quotient of $\pi_1(B)$ in $\mathcal{NP}_n$ is also a quotient of 
$\pi_1(M)$ which by Remark~\ref{rmk: elem} 
is the fundamental group of a closed aspherical manifold of 
dimension $\le m$. Thus Theorem~\ref{thm-intro: codim >2}
applies.

If $N$ is as in Theorem~\ref{thm-intro: codim >2},
then the isomorphism $\pi_1(B)\to\pi_1(N)$
factors through a surjection $\pi_1(B)\to\pi_1(M)$,
so $N$ is homotopy equivalent to $M$, and by the preceding
discussion $N$ is obtained by attaching an h-cobordism
to the boundary of the regular neighborhood
of a PL-embedded copy of $M$, which proves (1).
(It is known that $\pi_1(M)$ has trivial Whitehead group,
so the h-cobordism is smoothly trivial if $n\ge 6$
but we do not use this fact here).

Let $H\cong\pi_1(M)$ be the deck-transformation isometry group
of the nonpositively curved metric on the universal cover 
of $\mathrm{Int}\,N$.

If $H$ stabilizes a horoball bounded by a horosphere $S$, 
then $\mathrm{Int}\,N$ is diffeomorphic to $\mathbb R\times S/H$.
In particular, the associated spherical fibration has a section
obtained by sliding along the $\mathbb R$-factor.
If $q\ge 4$, then again the homotopy equivalence
$M\to S/H$ is homotopic to a PL-embedding~\cite[Corollary 11.3.4]{Wal-book},
whose regular neighborhood is a block bundle with structure
group in $\widetilde{PL}_{q-1}$, and hence the same is true
for its product with $\mathbb R$, so that the structure group of
the associated $S^{q-1}$-fibration reduces to
$\widetilde{PL}_{q-1}$.

If $H$ does not stabilize a horoball, then by superrigidity or the Flat
Torus Theorem, $H$ acts cocompactly on a totally geodesic subspace 
whose $H$-quotient is diffeomorphic to $M$ (and in fact homothetic
to $M$ if it has higher rank, or affinely equivalent if $M$ is flat).
The normal exponential map to the subspace is an $H$-equivariant
diffeomorphism, which identifies $\mathrm{Int}\,N$ with the normal bundle
to $M$. Thus the associated spherical fibration is linear.
\end{proof}

\begin{ex}
\label{ex: spherical fibrations}
Let $M$ be as in Theorem~\ref{thm: spher fibr over M}.
In each of the following cases $B$ bounds a manifold covered by 
a Euclidean space, but bounds no 
manifold whose interior admits a complete 
metric of $K\le 0$: 

(1) $B$ is the total space of a linear $S^{q-1}$ linear bundle
over $M$ with no section, where $q\ge 3$ and
$M$ is a non-flat infranilmanifold (e.g. if $M$ is orientable, 
even-dimensional, non-flat infranilmanifold, then the pullback of 
the unit tangent bundle
under a degree one map $M\to S^{m}$ has nonzero
Euler class, and hence no section). 

(2)
\label{thm: cor from bus}
$B=M\times \d C$ where $C$ be a compact contractible manifold 
such that $\pi_1(\d C)$ is a nontrivial group generated by finite order elements.
(In each dimension $\ge 5$ there 
are infinitely many such $C$'s, namely, 
given any finitely presented superperfect group $K$ there is $C$
with $\pi_1(\d C)\cong K$~\cite{Ker69}, 
and the desired infinite family is
obtained by varying $K$ among the free products of 
finitely many superperfect finite groups.)

(3) $B$ is the boundary of a $q$-block bundle over $M$ 
such that $q\ge 4$, the associated spherical fibration
is not linear, and its structure group 
does not reduce to $\widetilde{PL}_{q-1}$. 
(Such block bundles exist below
metastable range. Here is a specific example that works whenever
$q$ is even and $M$ has nonzero Betti numbers in degrees $q$ and $4i$
for some $i>\frac{q}{2}$, as happens for example if $M$ is a torus
of dimension $\ge 4i>2q$.
It is well-known but apparently not recorded
in the literature cf.~\cite{Kle-MO}, that for $q\ge 3$
the classifying space $B\widetilde{PL}_q$
is rationally equivalent to $BO\times BG_q$, which
can be deduced by combining the following results:
$(\widetilde{PL},\widetilde{PL}_{q})\to (G,G_q)$
is a weak homotopy equivalence~\cite[Theorem 1.11]{RouSan-blk-bunIII},  
$G$ is rationally contractible~\cite[Chapter 3A]{MadMil-book}, and 
$BO\to B\widetilde{PL}$ is a rational equivalence, 
see~\cite[Chapter 4C]{MadMil-book} and ~\cite[Corollary 5.5]{RouSan-blk-bunIII}.
Now $BO$ is rationally the 
product of Eilenberg-MacLane spaces corresponding
to Pontryagin classes, and
if $q$ is even, then 
$BG_q$ is rationally $K(\mathbb Q, q)$, detected
by the Euler class. 
By assumption there exist non-torsion cohomology classes $x\in H^q(M)$, 
$y\in H^{4i}(M)$ with $i>\frac{q}{2}$. As in~\cite[Appendix B]{BK-jams}
one can realize nonzero integer multiples of
$x$, $y$ as the Euler class $e$ and the Pontryagin class $p_i$
of a block bundle over $M$.
Its structure
group does not reduce to $\widetilde{PL}_{q-1}$
as $e\neq 0$, so the associated spherical fibration has
no section. The fibration is nonlinear because for linear bundles
$p_i=0$ for $i>\frac{q}{2}$.
Another example can be obtained if $y$ has degree $2q$
and is not proportional to $x^2$ 
by realizing a multiple of $y$ as $p_{q/2}$
an using that $e^2=p_{q/2}$ for linear bundles.) 
\end{ex} 

\section{Ends with injectivity radius going to zero}
\label{sec: new obstr}

Following~\cite{BGS}, we say that 
a subset $S$ of a Riemannian manifold 
{\it has ${Inj}\,{Rad}\to 0$\,} 
if and only if for every $\e>0$
the set of points of $S$ with injectivity radius $\ge \e$ is compact;
otherwise, $S$ {\it has ${Inj}\,{Rad}\not\to 0$}.
For example, by volume comparison
any finite volume complete manifold of $K\le 0$ has
$\mathrm{Inj}\,\mathrm{Rad}\to 0$.
Schroeder proved~\cite[Appendix 2]{BGS} that any complete
manifold of $-1\le K\le 0$ and $\mathrm{Inj}\,\mathrm{Rad}\to 0$
is the interior of a compact manifold with boundary
provided it contains no sequence of totally geodesic, immersed, 
flat $2$-tori whose diameters tend to zero.

Given a compact boundary component $B$ of a manifold $N$,
an {\it end $E$ of $\mathrm{Int}\,N$ that corresponds to $B$} 
is the intersection of $\mathrm{Int}\,N$
with a closed collar neighborhood of $B$; note that 
$E$ is diffeomorphic to $[1,\infty)\times B$.

The {\it cohomological dimension} 
of a group $G$ is denoted $\cd (G)$.

Here is the main result of this paper whose proof is in 
Section~\ref{sec: proofs}.

\begin{thm} 
\label{thm-intro: geom}
Let $N$ be a manifold with compact connected boundary $B$, 
let $E$ be an end of $V:=\mathrm{Int}\,N$ that corresponds to $B$, and let
$H$ be the deck-transformation group of
the universal cover $\widetilde V\to V$ 
corresponding to the image of the 
inclusion induced homomorphism
$\pi_1(E)\to\pi_1(V)$.
If $\widetilde V$ admits
a complete $H$-invariant metric $g$ of $K\le 0$
and $\widetilde V$ contains 
an $H$-invariant horoball, then\newline
\textup{(1)}
$\dim(B)=\mathrm{cd}(H)$ if and only if $B$ is 
incompressible in $N$.\newline
\textup{(2)}
If $\mathrm{Inj}\,\mathrm{Rad}\to 0$ on $E$, then 
$B$ is incompressible in $N$.\newline
\textup{(3)}
If $B$ is incompressible in $N$,
then the universal cover 
of $B$ is homeomorphic \phantom{(3)}
to a Euclidean space.
\end{thm}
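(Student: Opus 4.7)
The plan is to exploit the $H$-invariant horoball and the Busemann flow of its center at infinity to endow the cover $\widehat V:=\widetilde V/H$ with a smooth product structure, and then to match $B$ with a cross-section of this product.

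Let $\zeta\in\d_\infty\widetilde V$ be the center of the $H$-invariant horoball $HB$ and let $b\co\widetilde V\to\mathbb R$ be the Busemann function, normalized so that $HB=b^{-1}((-\infty,0])$ and $S:=b^{-1}(0)$ is the bounding horosphere. Since $H$ fixes $\zeta$, both $b$ and every level set $S_t:=b^{-1}(t)\cong\mathbb R^{n-1}$ are $H$-invariant, and because $|\nabla b|\equiv 1$, $b$ descends to a submersion $\bar b\co\widehat V\to\mathbb R$ with fibers $S_t/H$. Each $S_t/H$ is an aspherical $(n-1)$-manifold with simply connected cover $\mathbb R^{n-1}$; it is closed precisely when $H$ acts cocompactly on $S_t$, and otherwise $S_t/H$ is open aspherical, forcing $\mathrm{cd}(H)\le n-2$.

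Proof of \textup{(1)}. The $(\Leftarrow)$ direction is soft: $\widetilde V$ is contractible so $V$ and $N$ are aspherical, and an incompressible $B$ satisfies $\pi_i(B)\hookrightarrow\pi_i(N)=0$ for $i\ge 2$; hence $B$ is a closed aspherical $(n-1)$-manifold and $\mathrm{cd}(H)=\mathrm{cd}(\pi_1(B))=\dim(B)$. For $(\Rightarrow)$, assume $\mathrm{cd}(H)=n-1$; then by the dichotomy above $H$ acts cocompactly on $S$, so $S/H$ is a closed aspherical $(n-1)$-manifold. The gradient flow $\phi_s$ of $\bar b$ descends from the complete asymptotic geodesic flow on $\widetilde V$, so it is complete and unit-speed, and $(y,s)\mapsto\phi_s(y)$ is a diffeomorphism $(S/H)\times\mathbb R\to\widehat V$. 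I would next pass to a sufficiently deep sub-horoball so that $HB$ lies inside the component $E_0\subset\widetilde V$ of the preimage of $E$ stabilized by $H$; this identifies the end $\widehat E\cong E\cong B\times[1,\infty)$ of $\widehat V$ with the $\bar b\to-\infty$ end of $(S/H)\times\mathbb R$, and uniqueness of collar cross-sections of a collared end then yields $B\cong S/H$. In particular $\pi_1(B)\cong H$ injectively and $B$ is aspherical, so $B$ is incompressible. Part \textup{(3)} follows immediately: $\widetilde B\cong\widetilde{S/H}=S\cong\mathbb R^{n-1}$.

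For \textup{(2)} the plan is to reduce to the forward direction of \textup{(1)} by proving that $\mathrm{Inj}\,\mathrm{Rad}\to 0$ on $E$ forces cocompactness of $H$ on $S$. Short loops based at $x\in E$ lift to elements of $\pi_1(V)$ with small displacement at lifts $\tilde x\in E_0\subset\widetilde V$; the Margulis lemma for $K\le 0$ packages such elements into virtually nilpotent subgroups $N_x\le\pi_1(V)$. Parabolic rigidity together with the $H$-invariance of $HB$ forces $N_x\le H$ for $\tilde x$ sufficiently deep, with displacement functions on horospheres that stay bounded as $\tilde x\to\zeta$. Sweeping $x$ through deeper and deeper portions of $E$ then produces an $H$-invariant cocompact subset of $S$, yielding cocompactness. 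Now $\mathrm{cd}(H)=n-1=\dim(B)$, and \textup{(1)} applies.

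The main technical obstacle is the end-identification step in \textup{(1)}: producing a sub-horoball lying in $E_0$ and thereby matching the horoball end of $\widehat V$ with $\widehat E$, so that the cross-sections $B$ and $S/H$ can be compared. A second delicate point is the Margulis-style analysis in \textup{(2)}, forcing the short-loop generators into $H$ and sweeping out a cocompact subset of the horosphere; it is here that the horoball hypothesis and the injectivity-radius collapse must be used in tandem.
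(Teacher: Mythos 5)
Your setup for part (1) --- the Busemann product structure $\widetilde V\cong S\times\mathbb R$ and the dichotomy ``$\mathrm{cd}(H)=n-1$ iff $H$ acts cocompactly on the horosphere $S$'' --- matches the paper's, and the $(\Leftarrow)$ direction is fine. But the step you yourself flag as ``the main technical obstacle'' (producing a deep sub-horoball inside the $H$-invariant lift $E_0$ of $E$) is precisely the heart of the paper's proof, and you have not supplied it. The paper's argument is: $b$ descends to the quotient and is bounded on the compact $B_1$, so $\widetilde B_1$ lies in the $r$-neighborhood of an $H$-invariant horosphere $H_1$; since $\widetilde B_1=\d\widetilde E$ separates $\widetilde V$, each component of $\widetilde V\setminus N_{r+1}(H_1)$ lies either in $\widetilde E$ or in its complement, and if neither lay in $\widetilde E$ then $\widetilde E\subset N_{r+1}(H_1)$, which is $H$-cocompact (this is where cocompactness of $H$ on $H_1$, i.e.\ $\mathrm{cd}(H)=n-1$, enters), forcing the noncompact closed set $E$ into a compact subset of $V$ --- a contradiction. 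You need this, or something equivalent, on the page. Separately, ``uniqueness of collar cross-sections'' is not available in the form you use it: nested collar neighborhoods of an end give an h-cobordism (hence a homotopy equivalence $B\simeq S/H$, which suffices for incompressibility), not a diffeomorphism. Consequently your one-line deduction of (3), $\widetilde B\cong S$, is unjustified; one only gets that $\widetilde B$ is properly homotopy equivalent to $\mathbb R^{n-1}$, and must then invoke simple connectivity at infinity and the Stallings--Edwards--Wall--Freedman characterization of Euclidean space, as the paper does.

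Part (2) is the more serious problem: the Margulis lemma requires a lower curvature (or bounded geometry) bound, and the theorem assumes only completeness and $K\le 0$, so your short-loop/virtually-nilpotent analysis is not available, and the appeal to ``parabolic rigidity'' forcing $N_x\le H$ is not an argument. Moreover your strategy of reducing (2) to (1) by first proving cocompactness of $H$ on $S$ inverts the logical order: in the paper that cocompactness is only a consequence of incompressibility (via the ``only if'' half of (1)), not an input. The paper's proof of (2) avoids all of this with elementary $K\le 0$ convexity: the thin part $\widetilde E_{<\e}$ is open and locally convex (it is a union of sublevel sets of displacement functions); $\mathrm{Inj}\,\mathrm{Rad}\to 0$ on $E$ says by definition that the $H$-action on the thick part $\widetilde E\setminus\widetilde E_{<\e}$ is cocompact, so the thick part lies within distance $r$ of an $H$-invariant horosphere; and a ray argument toward the fixed point at infinity (convexity of displacement functions along such rays) shows $\widetilde E_{<\e}$ is convex and contains a deep horoball $U_2$. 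The purely topological argument from (1) then runs verbatim with this $U_2$. You should either adopt this route or find a substitute for Margulis that uses no lower curvature bound.
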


\begin{cor} 
\label{cor-intro: tg}
Let $B$ be a closed manifold that
bounds a manifold $N$, and let $E$ be an end of $\mathrm{Int}\,N$
corresponding to $B$. 
If $\pi_1(B)$ is reductive, and 
$\mathrm{Int}\,N$ admits a complete metric of $K\le 0$ and
$\mathrm{Inj}\,\mathrm{Rad}\to 0$ on $E$, then
$B$ is incompressible in $N$.
\end{cor}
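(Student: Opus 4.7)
Set $V:=\mathrm{Int}\,N$ with the given complete metric $g$ of $K\le 0$, let $\pi\co\widetilde V\to V$ denote the universal cover equipped with the pulled-back metric, and let $H$ be the image of the inclusion-induced map $\pi_1(E)\to\pi_1(V)$. Since $E\cong[1,\infty)\times B$ deformation retracts onto a copy of $B$, the canonical map $\pi_1(B)\cong\pi_1(E)\twoheadrightarrow H$ is an epimorphism. The plan is to verify the hypotheses of Theorem~\ref{thm-intro: geom}(2); the only nontrivial one is that $\widetilde V$ contains an $H$-invariant horoball, and I will extract this from reductivity of $\pi_1(B)$ after ruling out the ``cocompact submanifold'' alternative by means of $\mathrm{Inj}\,\mathrm{Rad}\to 0$.

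First I would check that $H$ is a discrete, torsion-free, non-cocompact isometry subgroup of the Hadamard manifold $\widetilde V$. Torsion-freeness and discreteness are automatic, as $H\le\pi_1(V)$ and $\pi_1(V)$ is torsion-free by Cartan-Hadamard. For non-cocompactness, if $H$ acted cocompactly on $\widetilde V$ then the intermediate cover $\hat V:=H\backslash\widetilde V$ would be compact, making $\hat V\to V$ finite-sheeted (its discrete fiber $\pi_1(V)/H$ is compact, hence finite), so $V$ itself would be compact --- contradicting that $V=\mathrm{Int}\,N$ is the interior of a manifold with nonempty compact boundary. Reductivity of $\pi_1(B)$ applied to the epimorphism onto $H$ then gives two alternatives: either (a) $H$ stabilizes a horoball in $\widetilde V$, or (b) $H$ acts cocompactly on a proper totally geodesic submanifold $S\subsetneq\widetilde V$.

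Next I would rule out (b) using the injectivity radius hypothesis. Pick $p_n\in E$ leaving every compact subset of $E$ with $\mathrm{Inj}\,\mathrm{Rad}(p_n)\to 0$. In the Hadamard quotient $V$ one has $\mathrm{Inj}\,\mathrm{Rad}(p_n)=\tfrac{1}{2}\min_{\gamma\neq 1}d(\tilde p_n,\gamma\tilde p_n)$ for any lift $\tilde p_n$, so there exist $\gamma_n\in\pi_1(V)\setminus\{1\}$ with $\e_n:=d(\tilde p_n,\gamma_n\tilde p_n)\to 0$; the projection of the geodesic from $\tilde p_n$ to $\gamma_n\tilde p_n$ is a loop at $p_n$ contained in the metric ball $B_g(p_n,\e_n)$. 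Because the inner boundary $\d E\subset V$ of the collar is a compact copy of $B$ and $p_n$ escapes every compact subset of $V$, completeness yields $d_g(p_n,\d E)\to\infty$; thus for $n$ large the short loop lies entirely in $E$, and conjugation by a path in $E$ from the basepoint to $p_n$ shows $\gamma_n\in H\setminus\{1\}$. The nearest-point projection $\pi_S\co\widetilde V\to S$ is $1$-Lipschitz and $H$-equivariant in a Hadamard manifold, giving $d(\pi_S(\tilde p_n),\gamma_n\pi_S(\tilde p_n))\le\e_n\to 0$; but the free cocompact isometric action of the torsion-free discrete group $H$ on $S$ has a positive infimum of displacements --- twice the injectivity radius of the compact Riemannian manifold $H\backslash S$ --- which is the desired contradiction.

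Hence alternative (a) holds, $\widetilde V$ admits an $H$-invariant horoball, and Theorem~\ref{thm-intro: geom}(2) immediately yields incompressibility of $B$ in $N$. The hard part is the containment $B_g(p_n,\e_n)\subset E$ that places $\gamma_n$ in $H$: this is where the metric (not merely topological) condition $\mathrm{Inj}\,\mathrm{Rad}\to 0$ interacts with compactness of the inner collar boundary, and it is the only step that requires more than formal manipulation.
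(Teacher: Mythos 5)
Your proof is correct and follows essentially the same route as the paper's: verify the hypotheses (discreteness, torsion-freeness, non-cocompactness of $H$) needed to invoke reductivity, observe that the dichotomy it yields is exactly ``horoball'' versus ``cocompact action on a proper totally geodesic $S$'', rule out the latter via the $H$-equivariant $1$-Lipschitz nearest-point projection onto $S$ together with $\mathrm{Inj}\,\mathrm{Rad}\to 0$, and then feed the horoball case into Theorem~\ref{thm-intro: geom}(2). The paper states the contradiction in the $S$-case in a single clause; your version correctly spells out the one genuinely non-formal step, namely that for $n$ large the short geodesic loop at $p_n$ lies inside $E$ (because $d_g(p_n,\d E)\to\infty$ while $\e_n\to 0$) so the resulting deck transformations actually land in $H$ rather than merely in $\pi_1(V)$.
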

\begin{proof}
If $H$ is as in Theorem~\ref{thm-intro: geom}, then since $H$
is reductive, it either stabilizes a horoball or acts cocompactly on a totally geodesic
proper submanifold $S$. In the former case $B$ is incompressible by
Theorem~\ref{thm-intro: geom}. The latter case cannot happen 
because the nearest point projection onto $S$ is $H$-equivariant
and distance nonincreasing, and there is a lower bound for displacement
of elements of $H$ on $S$ while the assumption
$\mathrm{Inj}\,\mathrm{Rad}\to 0$ on $E$ gives
a sequence of points of $V$ whose displacements under some elements 
of $H$ tend to zero.
\end{proof}

\section{Ends with fundamental groups of codimension one}
\label{sec: codim 1}

A small variation on the proof of Theorem~\ref{thm-intro: geom} yields:

\begin{add}
\label{add: codim one}
The part\, \textup{(1)}\ of\, \textup{Theorem~\ref{thm-intro: geom}}\, 
holds if the assumption 
``$\widetilde V$ contains 
an $H$-invariant horoball'' is replaced by ``$\widetilde V$ contains 
an $H$-invariant totally geodesic submanifold of codimension one''.
\end{add}

\begin{cor} 
\label{cor-intro: tors-free quot}
Let $B$ be a closed $(n-1)$-manifold such that
$\pi_1(B)$ is reductive and any nontrivial quotient of $\pi_1(B)$ in the class
$\mathcal{NP}_n$ has cohomological dimension $n-1$. 
If $B$ bounds a manifold $N$ such that $\mathrm{Int}\,N$ admits
a complete metric of $K\le 0$, then $B$ is incompressible in $N$. 
\end{cor}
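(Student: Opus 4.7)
The strategy is to verify the geometric hypothesis of Theorem~\ref{thm-intro: geom}(1) or of Addendum~\ref{add: codim one} by exploiting the reductivity of $\pi_1(B)$, and then to read off incompressibility from a cohomological-dimension count.

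Let $V=\Int N$ with the given $K\le 0$ metric, let $\widetilde V$ denote its universal cover (a Hadamard manifold by Cartan--Hadamard), let $E$ be the end of $V$ corresponding to $B$, and let $H$ be the image of the inclusion-induced homomorphism $\pi_1(E)=\pi_1(B)\to\pi_1(V)$. Then $H$ acts freely and isometrically on $\widetilde V$ as the deck-transformation group of $\widetilde V\to\widetilde V/H$. The intermediate cover $\widetilde V/H\to V$ is a local isometry, so $\widetilde V/H$ is a complete $n$-manifold of $K\le 0$ with fundamental group $H$, whence $H\in\mathcal{NP}_n$. Because $V$ is noncompact, so is its cover $\widetilde V/H$, so $H$ is non-cocompact as an isometry group of $\widetilde V$. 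Moreover $H$ is torsion-free, being a subgroup of the torsion-free group $\pi_1(V)$.

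Applying reductivity of $\pi_1(B)$ to the epimorphism $\pi_1(B)\twoheadrightarrow H$ yields a dichotomy: either (a) $H$ stabilizes a horoball in $\widetilde V$, or (b) $H$ acts cocompactly on a totally geodesic proper submanifold $S\subset\widetilde V$. In case (a) the hypotheses of Theorem~\ref{thm-intro: geom} are immediately satisfied. In case (b) the submanifold $S$ is itself a Hadamard manifold on which $H$ acts freely (by torsion-freeness) and cocompactly, so $S/H$ is a closed aspherical manifold and $\mathrm{cd}(H)=\dim S$. When $H$ is nontrivial, the standing hypothesis that any nontrivial $\mathcal{NP}_n$-quotient of $\pi_1(B)$ has cohomological dimension $n-1$ forces $\dim S=n-1$; thus $S$ is codimension-one in $\widetilde V$, and Addendum~\ref{add: codim one} applies in place of Theorem~\ref{thm-intro: geom}.

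In either branch, part~(1) of Theorem~\ref{thm-intro: geom} (extended by Addendum~\ref{add: codim one} in case (b)) tells us that $B$ is incompressible in $N$ if and only if $\dim B=\mathrm{cd}(H)$, and both sides equal $n-1$. Hence $B$ is incompressible. The main delicate step is case~(b), where the cohomological-dimension hypothesis is precisely what upgrades the totally geodesic submanifold $S$ to codimension one and so unlocks the Addendum; a minor side point, which sits outside the main argument, is the degenerate case $H=\{1\}$, which can occur only when $\pi_1(B)$ admits no nontrivial torsion-free $\mathcal{NP}_n$-quotient and the hypothesis on quotients is vacuous.
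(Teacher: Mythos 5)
Your overall strategy matches the paper's: apply reductivity of $\pi_1(B)$ to its image $H$ in $\pi_1(V)$, conclude that $H$ either stabilizes a horoball or acts cocompactly on a totally geodesic proper submanifold $S$, and combine the cohomological-dimension hypothesis with Theorem~\ref{thm-intro: geom}(1) (resp.\ Addendum~\ref{add: codim one} in the totally geodesic case). Your observation that $\mathrm{cd}(H)=\dim S$ upgrades $S$ to codimension one, unlocking the Addendum, is exactly right, as is the verification that $H$ is discrete, torsion-free and non-cocompact so reductivity applies.

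There is, however, a genuine gap in your treatment of $H=\{1\}$, which you set aside as a ``minor side point.'' First, your assertion that $H=\{1\}$ ``can occur only when $\pi_1(B)$ admits no nontrivial torsion-free $\mathcal{NP}_n$-quotient'' is not correct: $H$ is one particular quotient, the image of the inclusion-induced map $\pi_1(B)\to\pi_1(V)$, and it can perfectly well be trivial even when $\pi_1(B)$ has many nontrivial $\mathcal{NP}_n$-quotients. Second, and more seriously, nothing in your argument concludes in the case $H=\{1\}$: there $\mathrm{cd}(H)=0\ne n-1=\dim B$, so the ``if and only if'' of Theorem~\ref{thm-intro: geom}(1) would say $B$ is \emph{not} incompressible. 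The case therefore has to be \emph{excluded}, not merely noted. The paper does this at the outset, observing that $H=\{1\}$ would make the closed manifold $B$ bound a manifold whose interior is the contractible Hadamard manifold $\widetilde V$, and appealing to Theorem~\ref{thm: intro-asp} (whose proof, via Remark~\ref{rmk: long exact pair}, shows a closed manifold cannot bound such a noncompact contractible manifold). Adding that step would complete your argument; without it the proof does not go through.
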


\begin{proof}
If $H$ is as in Theorem~\ref{thm-intro: geom}, then $H$ cannot
be trivial by Theorem~\ref{thm: intro-asp}, hence
$\mathrm{cd}(H)=\dim(B)$, so $B$ is incompressible in $N$
by Theorem~\ref{thm-intro: geom} and Addendum~\ref{add: codim one}. 
\end{proof}

\begin{ex} 
\label{ex: tf general}  
Corollary~\ref{cor-intro: tors-free quot} applies to the
manifold $M$ as in Theorem~\ref{thm: spher fibr over M} provided
$\pi_1(M)$ has no proper torsion-free quotient, which ensures that
the cohomological dimension of any 
nontrivial torsion-free quotient of $\pi_1(M)$ equals $\dim(M)$.
In fact if $M$ is higher rank,
irreducible, locally symmetric manifold of $K\le 0$,
then  $\pi_1(M)$ has no proper torsion-free quotients
by the Margulis Normal Subgroup Theorem. 
There are also infranilmanifolds whose fundamental group has no proper 
torsion-free quotients. This includes all $3$-dimensional infranilmanifolds
with zero first Betti number, such as the infinite family $2$ 
of~\cite[page 156]{DIKL-3dnil} and the Hantzsche-Wendt flat $3$-manifold,
as well as some higher-dimensional flat manifolds in dimensions 
$p^2-1$~\cite[Theorem 1(9)]{GupSid} and $p^3-p$~\cite[page 29]{Cid} 
for any prime $p$. The fundamental group of the Klein bottle
also has no proper torsion-free quotients.
\end{ex}

\begin{rmk}
If one is not concerned with making sure that $B$
bounds a manifold whose interior is covered by $\mathbb R^n$,
there is a simple recipe for constructing $B$'s
to which Corollaries~\ref{cor-intro: tg} and~\ref{cor-intro: tors-free quot} apply;
for concreteness we focus on the latter.
Start with $\pi_1(M)$ as in Example~\ref{ex: tf general}, 
consider any finitely presented group that reduces to $\pi_1(M)$
relative to $\mathcal{NP}_n$, and realize the
group as the fundamental group of a closed $(n-1)$-manifold $B$,
which is always possible for $n\ge 5$.  Now if $B$ does 
bound a manifold whose interior admits 
a complete metric of $K\le 0$, 
then Corollary~\ref{cor-intro: tors-free quot}
forces $B$ to be incompressible,  and hence
aspherical with $\pi_1(B)\cong\pi_1(M)$.
For instance, $B$ is not incompressible if 
\begin{itemize}
\item
$\pi_1(B)$ has a nontrivial anti-$\mathcal{NP}_n$ subgroup, or 
\item
$B$ is the connected sum $M\# S$, where $S$ is simply-connected and 
not a homotopy sphere (if $M\# S$ were aspherical, its universal
cover would contains a codimension one sphere bounding 
a punctured copy of $S$, so the latter would be contractible).
\end{itemize}
It takes more effort to find $B$ that bounds a
manifold whose universal cover is a Euclidean space but which
cannot bound a manifold whose interior admits a complete metric of $K\le 0$.
Here is an infinite family of such examples.
\end{rmk}

\begin{ex} 
\label{ex: attach circle times homology sphere}
Let $C$ be a compact contractible manifold
as  in Example~\ref{ex: spherical fibrations}(2).
Fix a closed embedded disk $\Delta\subset\d C$.
Take $M$ as in Example~\ref{ex: tf general}, and let $\a$ 
be a homotopically nontrivial
embedded circle in $M$ with a trivial normal bundle; the latter can be 
always arranged by replacing $\a$ with its ``square'' in $\pi_1(M)$.
Attach $M\times [0,1)$ to $S^1\times C$ by identifying the tubular
neighborhood of $\a$ in $M\times\{0\}$
with $S^1\times\Delta$. The result is an aspherical
manifold $N$ with compact boundary $B:=\d N$. 
By Example~\ref{ex: asph codim3 gluing} $B$ is not aspherical.
The arguments of~\cite[Theorem 6.1]{Bel-bus} based on a strong version
of Cantrell-Stallings hyperplane linearization theorem proved in~\cite{CKS}
imply $\mathrm{Int}\,N$ is covered by $\mathbb R^n$. 
Since $\pi_1(B)$ is an amalgamated product of $\pi_1(M)$ and 
$\mathbb Z\times \pi_1(\d C)$ along $\mathbb Z$, where $\pi_1(N)$
is generated by finite order elements,
the only nontrivial
quotient of $\pi_1(B)$ in $\mathcal{NP}_n$ is $\pi_1(M)$, so $B$  
cannot bound a manifold whose interior admits a complete metric of $K\le 0$
by Corollary~\ref{cor-intro: tors-free quot}.
\end{ex}

\section{Proof of Theorem~\ref{thm-intro: geom}}
\label{sec: proofs}

\begin{proof} Figure~\ref{fig} below is meant to accompany the proof of (2),
but it also illustrates the proof of (1) if one ignores the labels on the left.
Let $\widetilde E$ denote a connected $H$-invariant lift of $E$
to $\widetilde V$.

(1)
The ``only if'' direction is trivial: 
the incompressibility and compactness of $B$ implies
$\cd (H)=\dim(B)$, so we focus on the ``if'' direction. 

Let $B_t$ be the fiber of the projection
$E\cong \d E\times [1,\infty)\to [1, \infty)$ over $\{t\}$,
and let $\widetilde B_t\subset\widetilde E$ be the lift of $B_t$.

Compactness of $B_1$ and the fact that $H$ 
preserves a horoball implies that $\widetilde B_1$ is in the $r$-neighborhood 
of a horosphere $H_1$ for some $r$. 
Let $N_{r+1}(H_1)$ be the closed $r+1$-neighborhood 
of $H_1$.

We next show that $\widetilde E$ contains a component 
of $\widetilde V-N_{r+1}(H_1)$. Otherwise, since $\widetilde B_1$ 
separates $\widetilde V$, both components lie in $\widetilde V-\widetilde E$,
so that $\widetilde E\subset N_{r+1}(H_1)$.
Since $\cd(H)=\dim(H_1)$
and $H$ stabilizes $H_1$, the $H$-action on $H_1$
is cocompact, and hence so is the $H$-action on
$N_{r+1}(H_1)$. So $E$ lies is a compact subset of 
$V$, which is a contradiction.

Denote the component of $\widetilde V-N_{r+1}(H_1)$
that lies in $\widetilde E$ by $U_2$; then 
$H_2:=\d U_2$ is a horosphere concentric to $H_1$.

Since $B_t$ separates $E$, the distance from $B_1$ and $B_t$
is an increasing continuous function of $t$, and completeness
of $E$ implies that the function is unbounded. Hence the same is true
for the distance from $\widetilde B_1$ and $\widetilde B_t$,
and therefore there is $s$
such that the distance between $\widetilde B_1$
and $\widetilde B_s$ equals $2r+3$.
Note that $U_2$ contains $\widetilde B_s$.

Since $H$ acts cocompactly on $[1,s]\times \d \widetilde E$,
there is $R$ such that $\widetilde B_s$ lies in the
$R$-neighborhood of $\widetilde B_1$, and hence in the 
$R+r$-neighborhood of $H_1$. Let $H_3$ be a horosphere in 
$U_2$ that bounds the $R+r+1$-neighborhood of $H_1$, so that
$H_3$ is disjoint from $[1,s]\times\d \widetilde E$.

\begin{figure}
\centering
\includegraphics[scale = 0.6]{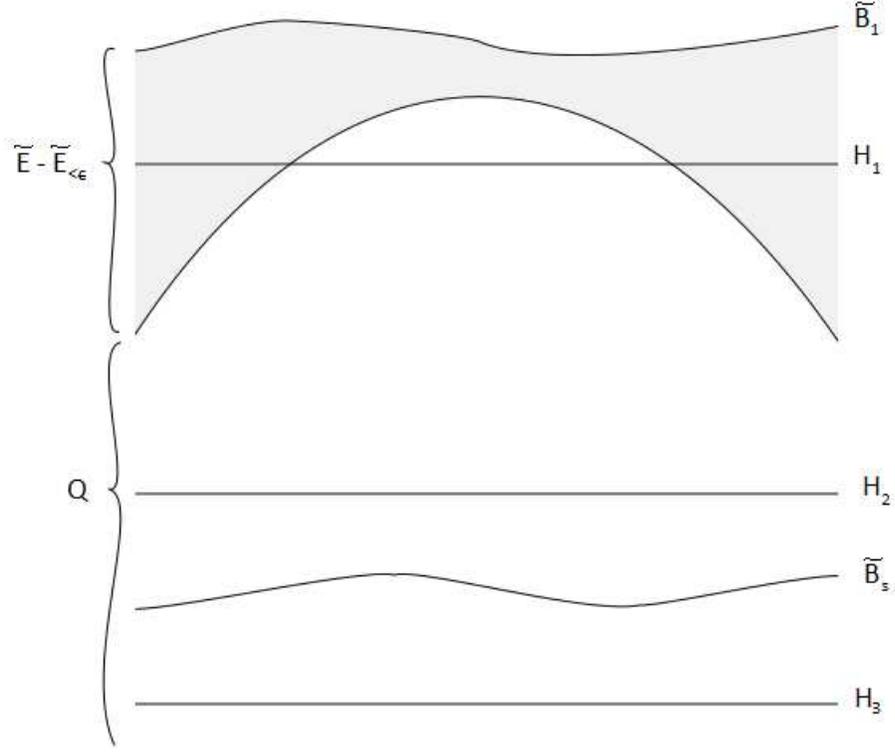}
\caption{Schematic for the proof of Theorem~\ref{thm-intro: geom} (2)}
\label{fig}
\end{figure}

By construction the inclusion
of $\widetilde B_s$ into $\widetilde E$, which is a homotopy equivalence,
factors through the contractible 
region between $H_2$ and $H_3$, so $\widetilde B_s$
is contractible. It follows that $B_s$
is aspherical and $\pi_1$-injectively embedded into $V$, and hence 
$\d N$ is incompressible in $N$.

(2) 
Let $E_{<\e}$ denote the 
set of points of $E$ with injectivity radius $<\e$.
As $\d E$ is compact, $\d E$ and $E_{<\e}$ are disjoint for small enough $\e$,
which we assume henceforth.
Let $\widetilde E_{<\e}$ be the preimage of $E_{<\e}$
under the covering $\widetilde{E}\to E$. 
The set $\widetilde E_{<\e}$ is open and hence locally convex
(for if $d_\g(x)<\e$, then $d_\g(y)<\e$ for all $y$ close 
to $x$, where $d_\g$ denotes the displacement
function for the deck-transformation $\g$).
Since $\mathrm{InjRad}\to 0$ on $E$,
the $H$-action on $\widetilde E-\widetilde E_{<\e}$ is cocompact, so
since $H$ stabilizes a horoball, there is $r$ such that
$\widetilde E-\widetilde E_{<\e}$ lies in an $r$-neighborhood
of a $H$-invariant horosphere, which we denote $H_1$.
Let $H_2$ be the horosphere that
bounds the $r+2$-neighborhood of $H_1$ and
lies in the horoball bounded by $H_1$.
Let $U_2$ be the horoball bounded by $H_2$.
The distance between 
$U_2$ and $\widetilde E-\widetilde E_{<\e}$ is $\ge 2$.

Next we show that $\widetilde E_{<\e}$ is convex and
$U_2\subset \widetilde E_{<\e}$. Let $Q$ be an arbitrary
component of $\widetilde E_{<\e}$. Thus $Q$ is convex
(as a connected, locally convex subset of a Hadamard manifold)
and hence its boundary $\d Q\subset \widetilde E-\widetilde E_{<\e}$
is a topological, properly embedded
submanifold that therefore separates $\widetilde V$.
Consider a ray $\s$ that starts at a point $\s(0)\in Q$
in the $1$-neighborhood of $\d Q$,
and that ends in the center $\s(\infty)$ of $U_2$ at infinity.
As $\s(0)$ lies in the $r+1$-neighborhood of $H_1$, 
the ray $\s$ intersects $H_2$. If $\g\in H$
whose displacement $d_\g$ at $\s(0)$ is $<\e$,
then since $H$ fixes $\s(\infty)$,
nonpositivity of the curvature implies $d_\g(\s(t))<\e$ for all $t$,
so $\s\subset Q$. As $\d Q$ separates $\tilde V$ and
is disjoint from $U_2$, we get $U_2\subset Q$.
Since $Q$ is arbitrary
and $U_2$ is connected, we conclude that 
$Q=\widetilde E_{<\e}$. 

The rest of the proof works verbatim as in (1),
which is a purely topological argument and in particular,
the fact that $U_2$ need not be a horoball in (1), while it is a 
horoball in (2) does not matter.

(3) If $B$ is incompressible, then $B$ is homotopy equivalent
to a horosphere quotient, which is a closed manifold
whose universal cover is diffeomorphic to $\mathbb R^{n-1}$, where $n-1=\dim(B)$.
Thus the universal cover of $B$ is properly homotopy equivalent to $\mathbb R^{n-1}$,
and hence is simply-connected at infinity, and so
homeomorphic to $\mathbb R^{n-1}$
by~\cite{Sta62, Edw-1-conn-at-inf, Wal-1-conn-at-inf, Fre-jdg82}.
\end{proof}

\begin{proof}[Proof of Addendum~\ref{add: codim one}]
The proof of (1) actually works when $H_1$ is any codimension one properly embedded
submanifold whose normal exponential map is a diffeomorphism. 
Then $H_2$, $H_3$ are hypersurfaces equidistant to $H_1$ that bound
$r+1$, $r+R+1$ neighborhoods of $H_1$, respectively.
\end{proof}

\begin{rmk}
In fact, if $B$ is incompressible in $N$, then
$B$ is h-cobordant to a horosphere quotient:
the region between $\widetilde B_1$ and $H_2$ projects
to an h-cobordism embedded in $E$. 
\end{rmk}

\begin{proof}[Proof of Theorem~\ref{thm-intro: no injrad}]
The assertions (1a), (1b), (2) follow from
Corollaries~\ref{cor-intro: tg}, \ref{cor-intro: tors-free quot}, and
Theorem~\ref{thm: spher fibr over M}(1), respectively.
\end{proof}

\small
\bibliographystyle{amsalpha}
\bibliography{enp}

\end{document}